\newcommand{\R}{\mathcal{R}}
\newcommand{\RRR}{\mathfrak{R}}
\newcommand{\MM}{\mathfrak{M}}
\newcommand{\mm}{\mathfrak{m}}
\newcommand{\V}{\mathcal{V}}
\newcommand{\Z}{\Bbb Z}
\newcommand{\RR}{\Bbb R}
\newcommand{\TT}{\Bbb T}
\newtheorem{conj}{Conjecture}
\newtheorem{theorem}{Theorem}
\newtheorem{definition}[theorem]{Definition}
\newtheorem{proposition}[theorem]{Proposition}
\newtheorem{cor}[theorem]{Corollary}
\newtheorem{lemma}[theorem]{Lemma}
\newtheorem{problem}[theorem]{Problem}
\newtheorem{remark}[theorem]{Remark}
\DeclareMathOperator{\Span}{Span}
\DeclareMathOperator{\supp}{supp}
\DeclareMathOperator{\N}{\mathbb{N}}
\numberwithin{theorem}{section}
\begin{document}
\title[Variations on a Theorem of Bourgain]{Polynomial Ergodic Averages Converge Rapidly: Variations on a Theorem of Bourgain}
\author{Ben Krause}
\address{UCLA Math Sciences Building\\
         Los Angeles,CA 90095-1555}
\email{benkrause23@math.ucla.edu}
\date{\today}
\maketitle

\begin{abstract}
Let $L^2(X,\Sigma,\mu,\tau)$ be a measure-preserving system, with $\tau$ a $\Z$-action. In this note, we prove that the ergodic averages along integer-valued polynomials, $P(n)$,
\[ M_N(f):= \frac{1}{N}\sum_{n \leq N} \tau^{P(n)} f \]
converge pointwise for $f \in L^2(X)$. We do so by proving that, for $r>2$, the \emph{$r$-variation}, $\V^r(M_N(f))$, extends to a bounded operator on $L^2$.
We also prove that our result is sharp, in that $\V^2(M_N(f))$ is an unbounded operator on $L^2$.
\end{abstract}

\section{Introduction}
Let $(X,\Sigma,\mu)$ be a non-atomic probability space, equipped with $\tau$ a measure-preserving $\Z$-action
\[ \tau_y f(x):= f(\tau_{-y} x). \]

For $(E_i) \subset \Z$, define the averaging operators
\[ M_i f(x):= \frac{1}{|E_i|} \sum_{y\in E_i} (\tau_yf)( x);\]
the classical ($L^2$-) pointwise ergodic theorem of Birkhoff \cite{BI} says that if \[ E_i= [0,i) \subset \Z,\] then the one-dimensional averages
$\{ M_i f(x) \}$
converge pointwise $\mu$-almost everywhere for $f \in L^2(X,\Sigma,\mu)$.

A standard proof proceeds by way of a density argument:
one begins with the the dense subset
\[ \{ \phi \in L^2 \cap L^\infty : \tau \phi = \phi \} \oplus \Span\{ h - \tau h : h \in L^{\infty} \} \subset L^2 \]
on which convergence holds, and absorbs small errors using the $L^2$-boundedness of the maximal function
\[ f \mapsto \sup_i |M_if|.\]

This density argument relies crucially on the \emph{smoothness} of the intervals $[0,i)$.

Obtaining pointwise convergence results of $\{ M_i f \}$ for rougher, exotic $\{E_i \} \subset \Z$ does not necessarily follow from quantitative estimates on an appropriate maximal function, since the dense-subclass result is often unavailable in this setting.

Perhaps the most famous instance of this difficulty arose in the study of averages along the squares, i.e.\
\[ E_i := \{1,4,9,\dots, i^2\} \subset \Z.\]
Indeed, to prove pointwise convergence of the ergodic averages of $L^2$-functions along the squares, Bourgain \cite{B0} attacked the issue of oscillation more directly, by showing that an appropriate
\emph{oscillation operator} was $L^2$ ``controlled'' (\cite[\S 7]{B0}.) In a redux of his argument \cite{B1}, Bourgain did so with the assistance of the \emph{$r$-variation operators} (below), classically used in probability theory to gain quantitative information on the rates of convergence.
\begin{definition}
For a collection of functions $\{f_i\}$
\[ \V^r(f_i)(x):= \sup_{(i_k) \text{ increasing}} \left( \sum_k |f_{i_k} - f_{i_{k+1}}|^r \right)^{1/r}(x)\]
is the \emph{$r$-variation} of the $\{f_i\}$.
\end{definition}

These variation operators are more difficult to control than the maximal function $\sup_i |f_i|$: for any $j$, one may pointwise dominate
\[ \sup_i |f_i | \leq \mathcal{V}^{\infty} (f_i) + |f_j| \leq \mathcal{V}^{r} (f_i) + |f_j|,\]
where $r < \infty$ is arbitrary.
This difficulty is reflected in the fact that
although having bounded $r$-variation, $r<\infty$, is enough to imply pointwise convergence,
there are functions which converge, but which have unbounded $r$ variation for \emph{any} $r< \infty$. (e.g.\ $\{(-1)^i\frac{1}{\log i+1} \}$)

Despite the increased delicacy of the variation operators, Bourgain proved that for $E_i = [0,i)$, the $r$-variation operators
\[ \mathcal{V}^r(M_if) \]
were of strong-type $(2,2)$ \cite[Corollary 3.26]{B1} for $r > 2$,
\[ \| \mathcal{V}^r(M_if)  \|_{L^2(X)} \leq \frac{C}{r-2} \|f\|_{L^2(X)}\]
for some absolute $C$.
\footnote{It was later shown in \cite{J} that $\V^2(M_if)$ was unbounded on $L^2$; the super-delicacy of the two-variation operator $\V^2$ will be addressed in our context in \S 7.}

In other words, not only do the classical Birkhoff ergodic means $\{M_if\}$ converge in $L^2$, but they do so \emph{rapidly}.

Since Bourgain's celebrated result, establishing variational estimates for families of averaging operators has been the focus of much research in ergodic theory and harmonic analysis (cf. e.g. \cite{J}, \cite{JRW}, or \cite{JSW}). Nevertheless, little research has been directed towards studying variations of averaging operators defined by ``rough,'' arithmetically defined, sets.
This is a natural object of consideration: variational estimates are a strong tool for proving pointwise convergence of averages when a density argument is unavailable.

Indeed, using an easy modification of Bourgain's earliest -- and most straightforward -- proof of the boundedness of maximal function along the squares (i.e.\ $E_i = \{1,2^2, \dots, i^2 \}$) \cite{B0},
\[ \text{there exists an absolute $C$ so that } \ \| \sup_N |M_N f| \|_{L^2(X)} \leq C \|f \|_{L^2(X)}, \]
one can prove (discussed in \S 3) that for each $r > 2, \ \epsilon>0$ there exists an absolute $C_{r,\epsilon}$ so that
\[ \| \mathcal{V}^r (M_N f : N \in \lfloor (1 + \epsilon)^n \rfloor) \|_{L^2(X)} \leq C_{r,\epsilon} \|f \|_{L^2(X)}, \]
which shows that the means $\{ M_N f(x) : N \in \lfloor (1 + \epsilon)^n \rfloor\}$ converge pointwise almost everywhere. Since $\epsilon$ can be taken arbitrarily small, and general means differ from the $(1+\epsilon)$-lacunary means by a multiplicative factor of at most $1+\epsilon$, this result is enough to recover the full pointwise convergence result -- without recourse to Bourgain's difficult ``oscillation'' argument \cite[\S 7]{B0} or his metric-entropy approach \cite[\S 6]{B1}.

Despite the utility of the variational approach, the following problem remains almost untouched:

\begin{problem}\label{bigproblem}
With $(X,\mu,\tau)$ as above, let
\[ P(n) := b_d n^d + \dots + b_1 n + b_0, \ b_j \in \Z, \ b_d > 0 \]
be an integer-valued polynomial, and set $E_i:= \{ P(1),\dots, P(i)\}$.

For which $1<p<\infty$, $2<r<\infty$ do there exist a priori bounds
\[ \| \mathcal{V}^r(M_N f) \|_{L^p(X)} \leq C_{p,r, P} \|f\|_{L^p(X)}? \]
\end{problem}

The main result of this note is a first step towards resolving the above problem. We prove

\begin{theorem}[Polynomial Means Converge Rapidly in $L^2$]\label{main}
Let $r>2$ be arbitrary, and let
\[ P(n) := b_d n^d + \dots + b_1 n + b_0, \ b_j \in \Z, \ b_d > 0 \]
 be an integer-valued polynomial. Then there exists an absolute constant $C_{r,P}$, depending only on $r$ and the polynomial, $P$, so that for any measure-preserving system $(X,\mu,\tau)$, and any $f \in L^2(X)$,
\[ \| \V^r(M_Nf)\|_{L^2(X)} \leq C_{r,P} \|f\|_{L^2(X)}.\]
\end{theorem}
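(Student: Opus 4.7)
The plan is to combine Bourgain's circle-method analysis of the polynomial ergodic means $M_N$ with the standard variational-analytic tools that play the role of maximal estimates: L\'epingle's $\V^r$ inequality (valid for $r>2$) and the Rademacher--Menshov $\V^2$ inequality.

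First, by the Calder\'on transference principle it suffices to establish the claimed bound for the convolution operators
\[ M_N f(x) := \frac{1}{N}\sum_{n=1}^N f(x - P(n)), \quad f \in \ell^2(\Z), \]
with Fourier multipliers $m_N(\beta) = \frac{1}{N}\sum_{n=1}^N e^{2\pi i P(n)\beta}$. Dyadic lacunary splitting then yields
\[ \V^r(M_N f) \lesssim_r \V^r(M_{2^k}f : k \in \NN) + \Big(\sum_{k \geq 0}\V^2(M_Nf : N \in [2^k,2^{k+1}))^2\Big)^{1/2}, \]
which uses $\ell^r \hookrightarrow \ell^2$ (for $r \geq 2$) to aggregate the local variations, reducing matters to a \emph{long} (lacunary) $r$-variation and a \emph{short} $\ell^2$-aggregated $\V^2$.

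For the long variation I would perform a Hardy--Littlewood major/minor arc decomposition of $m_{2^k}$: around each rational $a/q$ with $q \leq 2^s$, $m_{2^k}(\beta)$ is well approximated by $G(a/q)\Phi_k(\beta - a/q)$, where $G(a/q) := q^{-1}\sum_{r=0}^{q-1} e^{2\pi i P(r)a/q}$ is a polynomial Gauss sum satisfying $|G(a/q)| \lesssim q^{-c_d}$ for some $c_d > 0$ by Weyl's inequality, and $\Phi_k(\xi) := \int_0^1 e^{2\pi i \xi P(2^k t)}\,dt$ is the corresponding continuous analog. Applying L\'epingle's $\V^r$ inequality scale-by-scale to the continuous family $\Phi_k$, together with disjoint-support orthogonality across the rationals and Gauss-sum decay summed over denominator scales $s$, yields $\V^r$-control of the major-arc model on $\ell^2$; minor-arc errors inherit $\ell^2 \to \ell^2$ decay directly from a Weyl exponential-sum bound, with the resulting $2^{-cs}$-type savings more than absorbing the trivial $\V^r \leq 2\sup_k$.

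For the short variation I would apply the Rademacher--Menshov inequality inside each block,
\[ \V^2(M_Nf : N \in [2^k,2^{k+1})) \leq \sum_{j=0}^{k} \Big(\sum_i |M_{N_{i,j}}f - M_{N_{i-1,j}}f|^2\Big)^{1/2}, \]
with $N_{i,j}$ ranging over a $2^{k-j}$-grid inside $[2^k,2^{k+1})$. The crux reduces to showing that the multiplier differences $m_N - m_{N'}$, for $|N-N'| \asymp 2^{k-j}$ and $N,N' \in [2^k, 2^{k+1})$, enjoy $\ell^2 \to \ell^2$ operator norms decaying geometrically in $j$. This is the main obstacle: one must combine Weyl-type cancellation on minor arcs with the smoothness of the continuous model $\Phi_k$ on major arcs to extract a geometric gain in $j$ strong enough to beat the $O(k)$ Rademacher--Menshov blow-up and to close the overall $\ell^2$-sum in $k$.
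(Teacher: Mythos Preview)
Your outline matches the paper's strategy closely: Calder\'on transference, the long/short splitting, a circle-method approximation for the long (dyadic) piece, and a major/minor-arc analysis for the short piece. Two differences in execution are worth flagging. For the short variation the paper does not use Rademacher--Menshov; instead it proves a direct interpolation lemma (if the multipliers satisfy $\sup_\beta|m_n(\beta)|\le A$ and $\sup_\beta|m_n(\beta)-m_{n+1}(\beta)|\le a$ on a given frequency region, then $\|\V^2(B_nf)\|_{2}\lesssim \sqrt{NAa}\,\|f\|_2$), and applies it on minor arcs and on each major-arc annulus $\{|\{b_d\alpha\}-a/q|\approx 2^{-k}\}$ to produce exactly the geometric gain you are after; your Rademacher--Menshov route would reach the same numerology after the analogous optimization over $j$. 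For the long variation, ``L\'epingle scale-by-scale plus disjoint-support orthogonality'' is slightly too glib: the $r$-variation is nonlinear, so $L^2$-orthogonality of the Fourier pieces sitting at different $a/q\in\R_s$ does not by itself control $\V^r$ of their sum. The paper closes this gap by first replacing the oscillatory factors $v_t$ with genuine frequency projections $1_{[-1,1]}(t^d(\alpha-a/q))$ (at the cost of a summable square-function error) and then invoking a \emph{multi-frequency} variational inequality---a $\V^r$ upgrade of Bourgain's maximal multi-frequency lemma, with a $(\log|\R_s|)^2$ loss that is absorbed by the Gauss-sum decay $2^{-s\nu}$. If you intend L\'epingle to act only after modulation to a single frequency, you will need this multi-frequency step (or an equivalent device) to reassemble the pieces.
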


Since variation operators are \emph{semi-local} in the sense of \cite{C}, by Calder\'{o}n's transference principle \cite{C}, Theorem \ref{main} will follow from the result below:
\begin{proposition}\label{main2}
Let $r>2$ be arbitrary, and let
\[ P(n) := b_d n^d + \dots + b_1 n + b_0, \ b_j \in \Z, \ b_d > 0 \]
 be an integer-valued polynomial. Then, with $E_i := \{P(1),P(2),\dots, P(i)\}$ there exists an absolute constant $C_r$ so that for any $f \in l^2(\Z)$,
\[ \| \V^r(K_N*f)\|_{l^2(\Z)} \leq C_{r,P} \|f\|_{l^2(\Z)}.\]
where
\[ K_N*f(x) := \frac{1}{N} \sum_{n \leq N} f(x+ P(n))\]
is the discrete convolution operator.
\end{proposition}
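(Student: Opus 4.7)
My plan is to follow Bourgain's Fourier-analytic strategy from \cite{B0, B1}, sharpened to extract variational information rather than merely maximal information. The argument has two parts: a long-short variation decomposition reducing matters to lacunary scales, and a circle-method analysis of the lacunary long variation.

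\emph{Long-short decomposition and short variations.} The general $r$-variational inequality
\[ \V^r(K_N * f : N \in \NN) \lesssim_r \V^r(K_{2^n} * f : n \geq 0) + \Bigl(\sum_{n \geq 0} \V^r(K_N * f : 2^n \leq N < 2^{n+1})^r\Bigr)^{1/r} \]
reduces the problem to (a) the lacunary long variation and (b) an $\ell^r$-sum of short variations within dyadic blocks. For (b), the identity $K_{N+1}*f - K_N*f = (N+1)^{-1}\bigl(f(\cdot+P(N+1)) - K_N*f\bigr)$ gives $\|K_{N+1}*f - K_N*f\|_{\ell^2} \lesssim N^{-1}\|f\|_{\ell^2}$. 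Combined with the Rademacher--Menshov inequality
\[ \V^2(f_i : 2^n \leq i < 2^{n+1})^2 \lesssim n^2 \sum_{2^n \leq i < 2^{n+1}} |f_{i+1}-f_i|^2 \]
and the embedding $\V^r \leq \V^2$ (valid for $r \geq 2$), the $L^2$-norm of the short-variation piece is controlled by $\bigl(\sum_n n^2 \cdot 2^{-n}\bigr)^{1/2}\|f\|_{\ell^2} \lesssim \|f\|_{\ell^2}$.

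\emph{Long variation via the circle method.} It remains to bound $\V^r(K_{2^n}*f : n)$ on $\ell^2(\Z)$. Writing $\widehat{K_N}(\xi) := N^{-1}\sum_{n=1}^N e^{2\pi i P(n)\xi}$, Dirichlet's theorem assigns to every $\xi \in \TT$ an approximation $|\xi - a/q| \leq 1/(qN^{d-1})$ with $q \leq N^{d-1}$. I would split $\TT$ into minor arcs ($q > N^\eta$) and major arcs ($q \leq N^\eta$) for a small $\eta = \eta(d) > 0$. On minor arcs, Weyl's inequality gives the pointwise bound $|\widehat{K_N}(\xi)| \lesssim N^{-\delta}$ for some $\delta = \delta(d) > 0$; since this is summable across lacunary scales $N = 2^n$, the minor-arc contribution to $\V^r$ is absorbed by the triangle inequality. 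On the major arcs one uses the standard asymptotic
\[ \widehat{K_N}(a/q + \beta) \approx S(a,q)\,\Psi_N(\beta), \qquad S(a,q) := \frac{1}{q}\sum_{r=1}^q e^{2\pi i P(r)a/q}, \]
with $\Psi_N$ a continuous Weyl-type integral, valid in a shrinking $q$-dependent neighborhood of zero. Grouping denominators dyadically decomposes the major-arc operator as $\sum_{s \geq 0} K_N^{(s)}$, where $K_N^{(s)}$ isolates $q \in [2^s, 2^{s+1})$. Within each dyadic range the rationals $a/q$ are $\Omega(2^{-2s})$-separated, so the associated smooth cutoffs are disjoint; orthogonality combined with transference then reduces $\V^r(K_N^{(s)}*f : n)$ to the continuous $r$-variation of polynomial averages on $\RR$, controlled by Bourgain's continuous variational inequality \cite[Corollary 3.26]{B1}. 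The classical Gauss-sum bound $|S(a,q)| \lesssim q^{-\delta'}$ (valid for $d \geq 2$, with the degree-one case being essentially trivial) then yields $\|\V^r(K_N^{(s)}*f : n)\|_{\ell^2} \lesssim_r 2^{-\delta' s}\|f\|_{\ell^2}$, which sums geometrically in $s$.

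\emph{Main obstacle.} The principal difficulty is the large-$s$ regime, where $s$ is comparable to $\log \log N$ and the cutoffs around distinct rationals with denominators $\sim 2^s$ begin to overlap in $\TT$, destroying the clean disjointness used above. Preserving a summable bound there requires a variational enhancement of Bourgain's multi-frequency oscillation inequality \cite[\S 6]{B1}, with constants matching the expected $(r-2)^{-1}$ scaling. Establishing this variational multi-frequency inequality is, I expect, where the principal analytic work of the paper must reside.
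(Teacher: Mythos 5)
Your overall architecture (long--short splitting, circle method on lacunary scales, a multi-frequency variational inequality) matches the paper's, but your treatment of the short variation contains a fatal gap, and the short variation is precisely where the paper's main work lies. The inequality you invoke,
\[ \V^2(f_i : 2^n \le i < 2^{n+1})^2 \lesssim n^2 \sum_{2^n \le i < 2^{n+1}} |f_{i+1}-f_i|^2, \]
is not a valid form of Rademacher--Menshov. Pointwise it is false: from consecutive differences alone the best one has is $\V^2 \le \V^1 \le 2^{n/2}\bigl(\sum_i |f_{i+1}-f_i|^2\bigr)^{1/2}$, and the factor $2^{n/2}$ is essentially sharp. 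In $L^2$, the Rademacher--Menshov bound with only a logarithmic loss requires the increments to be mutually orthogonal, and the increments $K_{i+1}*f - K_i*f$ are not orthogonal (they are convolutions of the same $f$ with heavily overlapping multipliers). Using only $\|K_{i+1}*f-K_i*f\|_{l^2}\lesssim 2^{-n}\|f\|_{l^2}$, the most one can conclude (e.g.\ via Lemma \ref{smooth} with $N=2^n$, $A=1$, $a=2^{-n}$) is $\|\V^2(K_i*f : i\approx 2^n)\|_{l^2}\lesssim \|f\|_{l^2}$ \emph{uniformly} in $n$, whereupon $\sum_n\|\V^2_n\|_2^2$ diverges. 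Summing over dyadic blocks requires genuine decay in $n$, which cannot come from the size of the increments alone.

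The paper produces this decay by running the circle method inside each dyadic block. On the minor arcs, Weyl's inequality gives the multiplier the sup bound $A=2^{-n\nu}$, so Lemma \ref{smooth} yields $\sqrt{NAa}=2^{-n\nu/2}$, which is square-summable in $n$. On the major arcs, $f$ is decomposed into frequency projections $f_{\RRR_{s,k}}$ indexed by denominator size and distance to the rational; the asymptotic $\widehat{C_t}\approx S^i_P(a/q)\,(v_t-v_{2^n})$ gives the weight $2^{-\nu s}2^{-|k-nd|/2d}$, and the key point is that each fixed region $\RRR_{s,k}$ contributes non-negligibly to only a summable set of scales $n$, so interchanging the $(n,k)$ sums closes the estimate (Propositions \ref{est} and \ref{mainprop}). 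None of this is present in, or replaceable by, your short-variation paragraph. Your long-variation outline is essentially the paper's, and the multi-frequency variational inequality you flag as the main obstacle is indeed needed but is available from \cite{BK2} (cf.\ also \cite{N1}); the genuinely new difficulty you have not addressed is the short variation.
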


Interpolating this result against Bourgain's celebrated result \cite{B1}
\[ \| \mathcal{V}^\infty (M_N f) \|_{L^p(X)} \leq C_{p,P} \|f\|_{L^p(X)}, \ p>1,\]
yields the following

\begin{proposition}
Suppose $r > \max\{ p, p'\}$. Then for any dynamical system there exist absolute constants $C_{r,p,P}$ so that
\[ \| \V^r(M_Nf) \|_{L^p(X)} \leq C_{r,p,P} \|f\|_{L^p(X)}.\]
\end{proposition}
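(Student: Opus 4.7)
The plan is a standard vector-valued interpolation between Theorem~\ref{main} and Bourgain's $\V^\infty$ bound on $L^p(X)$. First I would linearize the $r$-variation. For any finite increasing sequence $\mathcal{S} = \{N_0 < N_1 < \cdots < N_K\} \subset \NN$, define the linear operator
\[ T_{\mathcal{S}} f := \bigl( M_{N_{k+1}} f - M_{N_k} f \bigr)_{k=0}^{K-1}, \]
mapping $L^p(X)$ into sequence-valued functions on $X$. By definition one has $\V^r(M_N f)(x) = \sup_{\mathcal{S}} \|T_{\mathcal{S}} f(x)\|_{\ell^r}$, so it suffices to bound $T_{\mathcal{S}}$ from $L^p(X)$ to $L^p(X;\ell^r)$ \emph{uniformly in} $\mathcal{S}$.

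For every $r_0 > 2$, Theorem~\ref{main} gives
\[ \|T_{\mathcal{S}} f\|_{L^2(X;\ell^{r_0})} \le \|\V^{r_0}(M_N f)\|_{L^2(X)} \le C_{r_0,P}\|f\|_{L^2(X)}, \]
and for every $p_1 > 1$, Bourgain's maximal inequality gives
\[ \|T_{\mathcal{S}} f\|_{L^{p_1}(X;\ell^\infty)} \le 2\bigl\|\sup_N |M_N f|\bigr\|_{L^{p_1}(X)} \le C_{p_1,P}\|f\|_{L^{p_1}(X)}, \]
both bounds being uniform in $\mathcal{S}$. Applying vector-valued Riesz--Thorin interpolation with target spaces $\ell^{r_0}$ and $\ell^\infty$, together with the identification $[\ell^{r_0},\ell^\infty]_\theta = \ell^r$ for $1/r = (1-\theta)/r_0$, yields
\[ \|T_{\mathcal{S}} f\|_{L^p(X;\ell^r)} \le C_{r,p,P}\|f\|_{L^p(X)}, \qquad \tfrac{1}{p} = \tfrac{1-\theta}{2} + \tfrac{\theta}{p_1}, \]
uniformly in $\mathcal{S}$, whenever an admissible triple $(r_0, p_1, \theta) \in (2,\infty) \times (1,\infty) \times (0,1)$ can be chosen. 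Taking $\sup_{\mathcal{S}}$ then concludes the proof.

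The remaining check is that the hypothesis $r > \max\{p,p'\}$ is exactly the condition under which such a triple exists. Setting $\theta = 1 - r_0/r$ (so that $1/r = (1-\theta)/r_0$ is automatic) and solving the remaining equation for $p_1$ yields $p_1 = 2p(r - r_0)/(2r - pr_0)$; as $r_0 \searrow 2$, the requirements $p_1 > 1$ and $2r > pr_0$ both collapse precisely to $r > \max\{p,p'\}$. I anticipate no serious obstacle: the only non-routine ingredient is the vector-valued form of the Riesz--Thorin theorem, which is classical.
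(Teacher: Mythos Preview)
Your approach---interpolate the $L^2$ variational bound against Bourgain's $L^p$ maximal inequality via a linearization and vector-valued Riesz--Thorin---is exactly the paper's approach, and your admissibility computation is correct. There is, however, a genuine gap in the linearization step.

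You take $\mathcal{S}$ to be a \emph{fixed} finite increasing sequence in $\NN$, and claim that a bound on $T_{\mathcal{S}}:L^p(X)\to L^p(X;\ell^r)$ uniform in $\mathcal{S}$ suffices. But the variation involves a \emph{pointwise} supremum: $\V^r(M_Nf)(x)=\sup_{\mathcal{S}}\|T_{\mathcal{S}}f(x)\|_{\ell^r}$, and in general
\[
\Bigl\|\sup_{\mathcal{S}}\|T_{\mathcal{S}}f(\cdot)\|_{\ell^r}\Bigr\|_{L^p(X)} \ \ge\ \sup_{\mathcal{S}}\bigl\|T_{\mathcal{S}}f\bigr\|_{L^p(X;\ell^r)},
\]
with the inequality going the wrong way for your purposes. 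The optimal sequence may vary with $x$, so ``taking $\sup_{\mathcal{S}}$'' of the operator norm does not recover $\|\V^r(M_Nf)\|_{L^p}$.

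The fix is the one the paper uses: allow the stopping times to be \emph{measurable functions} $N_0(x)<N_1(x)<\dots<N_K(x)$, and set
\[
Tf(x,k):=M_{N_{k+1}(x)}f(x)-M_{N_k(x)}f(x).
\]
For each fixed measurable choice of $\{N_k(\cdot)\}$, this $T$ is still linear in $f$; both endpoint bounds still hold pointwise (since $\|Tf(x,\cdot)\|_{\ell^{r_0}}\le \V^{r_0}(M_Nf)(x)$ and $\|Tf(x,\cdot)\|_{\ell^\infty}\le 2\sup_N|M_Nf(x)|$), so your interpolation goes through unchanged. Finally, for each $\epsilon>0$ one can choose measurable $\{N_k(\cdot)\}$ with $\|Tf(x,\cdot)\|_{\ell^r}\ge \V^r(M_Nf)(x)-\epsilon$, which closes the argument. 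With this correction your proof matches the paper's.
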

Again, this result follows from the analogous one on the integer lattice (see \S 7):

\begin{proposition}\label{interp}
For $r > \max\{ p, p'\}$,
\[ \| \V^r(K_N*f) \|_{l^p} \leq C_{r,p,P} \|f\|_{l^p}.\]
\end{proposition}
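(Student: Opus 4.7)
The plan is to interpolate Proposition \ref{main2} against Bourgain's polynomial maximal theorem \cite{B1}. Since $\V^\infty(K_N * f) \leq 2\sup_N |K_N * f|$ pointwise, the $l^q$-boundedness ($q > 1$) of the polynomial maximal function immediately yields
\[
\| \V^\infty(K_N * f) \|_{l^q(\Z)} \leq C_{q,P} \|f\|_{l^q(\Z)}, \qquad q > 1.
\]

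First I would recast variational inequalities as vector-valued estimates. Let $V^r$ denote the Banach space of sequences modulo constants equipped with the $r$-variation seminorm, and view $T : f \mapsto (K_N * f)_{N \geq 1}$ as a linear operator into sequences. Under this identification
\[
\|\V^r(K_N * f)\|_{l^p(\Z)} = \|Tf\|_{l^p(\Z; V^r)},
\]
so the two available bounds become
\[
T : l^2(\Z) \to l^2(\Z; V^{r_0}) \ \ (r_0 > 2), \qquad T : l^q(\Z) \to l^q(\Z; V^\infty) \ \ (q > 1).
\]

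Next I would apply complex interpolation to $T$. The crucial input is the interpolation identity
\[
[V^{r_0}, V^{r_1}]_\theta = V^{r_\theta}, \qquad \frac{1}{r_\theta} = \frac{1-\theta}{r_0} + \frac{\theta}{r_1},
\]
which can be established by isometrically embedding $V^r$ (modulo constants) into the $l^\infty$ direct sum, indexed over finite increasing subsequences of $\NN$, of $l^r$-spaces of the associated jump sequences, and then invoking standard interpolation theory for mixed-norm spaces (cf.\ \cite{JSW}). Combined with the usual interpolation for vector-valued $L^p$-spaces, this produces bounds $\|Tf\|_{l^p(V^{r_\theta})} \lesssim \|f\|_{l^p}$ for $\frac{1}{p} = \frac{1-\theta}{p_0} + \frac{\theta}{p_1}$.

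Finally I would optimize over endpoint pairs. For $p > 2$, interpolating $(p_0, r_0) = (2, 2+\varepsilon)$ against $(p_1, r_1) = (q, \infty)$ and letting $q \to \infty$, $\varepsilon \to 0^+$ yields $\V^r$-boundedness on $l^p$ for every $r > p$. For $1 < p < 2$, interpolating $(p_0, r_0) = (q, \infty)$ with $q \to 1^+$ against $(p_1, r_1) = (2, 2+\varepsilon)$ with $\varepsilon \to 0^+$ yields $\V^r$-boundedness on $l^p$ for every $r > p'$. Together these cover the range $r > \max\{p, p'\}$. The main obstacle is the rigorous justification of the interpolation identity for the $V^r$ spaces; care is required because the variation is a seminorm vanishing on constants, and the standard remedy is to quotient by constants before applying the Calder\'on complex interpolation method.
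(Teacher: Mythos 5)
Your overall strategy -- interpolating the $L^2$ bound of Proposition \ref{main2} against Bourgain's $l^q$ maximal theorem, with exactly the endpoint pairs and numerology you describe -- is the same as the paper's. The difference is in how the interpolation is implemented, and here your version has a soft spot. You make the ``crucial input'' the identity $[V^{r_0},V^{r_1}]_\theta = V^{r_\theta}$ and propose to prove it by isometrically embedding $V^r$ into an $l^\infty$-direct sum of $l^r$-spaces and ``invoking standard interpolation theory.'' That does not work as stated: complex interpolation is not compatible with passing to closed subspaces (the interpolation space of a couple of subspaces need not be the subspace of the interpolation space), and $l^\infty$-direct sums do not interpolate to the full $l^\infty$-sum of the interpolated fibers. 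What your argument can plausibly deliver is only the one-sided embedding $[V^{r_0},V^{r_1}]_\theta \hookrightarrow V^{r_\theta}$ (push forward by the isometric embedding, then use coordinate projections onto each finite increasing subsequence), and in fact that one direction is all you need -- but you should say so, since the asserted equality is both unproved and unnecessary.

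The paper sidesteps all of this with the standard linearization device: fix an arbitrary finite collection of measurable increasing stopping times $N_1(x) < \dots < N_K(x)$ and set
\[ Tf(x,k) := \bigl(K_{N_k(x)} - K_{N_{k+1}(x)}\bigr)*f(x), \]
which is genuinely linear in $f$ with values in scalar sequences. The two hypotheses become $T : l^2 \to l^2(l^{r_0})$ and $T : l^q \to l^q(l^\infty)$ with constants uniform in the choice of $\{N_k\}$, and the mixed-norm Riesz--Thorin lemma (Lemma \ref{absint}) gives $T : l^p \to l^p(l^r)$; taking the supremum over all such selections recovers $\V^r$. I would recommend adopting this linearization (or, equivalently, restricting your vector-valued argument to the one embedding actually needed and justifying the subspace and $l^\infty$-sum steps explicitly); as written, the claimed interpolation identity for the variation spaces is a genuine gap, even though the theorem you are after is within reach of your outline.
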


Finally, we prove that our $L^2$ theory is, in general, sharp. We do so by studying the more delicate $2$-variation operator. Specifically, we prove:

\begin{theorem}[The 2-Variation Operator is Unbounded on $L^2$]
For any $C > 0$, there exists an $f = f_C$ of $L^2$-norm one, but so that
\[ \| \V^2(M_N*f) \|_{L^2(X)} \geq C,\]
where here we fix
\[ M_N*f(x):= \frac{1}{N}\sum_{n \leq N} \tau^{n^2} f(x) ,\]
the (discrete) square means.
\end{theorem}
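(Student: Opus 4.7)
The plan is to adapt the cascade construction of Jones \cite{J}, who proved the analogous $\V^2$-unboundedness for Birkhoff averages, to the Weyl-sum setting of the squares. By Calder\'on's transference principle, it suffices to exhibit, for each $C > 0$, a unit-norm $f_C \in \ell^2(\Z)$ with $\|\V^2(K_N * f_C)\|_{\ell^2} \ge C$, where $K_N * f(x) = \frac{1}{N}\sum_{n \le N} f(x + n^2)$.

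The key arithmetic input is the Hardy--Littlewood/Gauss-sum expansion of the multiplier $m_N(\xi) = \frac{1}{N}\sum_{n \le N} e^{2\pi i \xi n^2}$: for $\xi = a/q + \beta$ with $(a,q) = 1$ and $|\beta|$ small,
\[
m_N(a/q + \beta) \;=\; G(a,q)\,\Phi(N^2 \beta) \;+\; \text{(minor-arc error)},
\]
where $|G(a,q)| \asymp q^{-1/2}$ is the normalized quadratic Gauss sum and $\Phi(t) = \int_0^1 e^{2\pi i t s^2}\,ds$ is a Fresnel-like profile with $\Phi(0) = 1$ and $\Phi(t) = O(|t|^{-1/2})$ as $|t| \to \infty$. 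Consequently, near each rational $a/q$, the multiplier $m_N$ undergoes a transition of amplitude $\asymp q^{-1/2}$ as $N$ sweeps through the critical scale $N \asymp |\beta|^{-1/2}$.

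The counterexample is built as a Rademacher--Menshov-style cascade. Fix $K$ large, choose rapidly growing (pairwise coprime) denominators $q_1 < \cdots < q_K$ so that the major arcs $\{\xi : |\xi - a_j/q_j| \le q_j^{-2}\}$ are pairwise disjoint, and for each $j$ pick an $L^2$-normalized Fourier bump $\psi_j$ supported in the $j$-th arc together with a pair of scales $M_j < 2M_j$ (with $M_j \asymp q_j$) along which $m_N$ executes its principal jump over $\supp \psi_j$. Setting $\widehat{f} = \sum_{j=1}^K \epsilon_j c_j \psi_j$ as a Rademacher-randomized sum with amplitudes $c_j$ tuned so that $\EE \|f\|_{\ell^2}^2 \asymp 1$, the crucial step is to select, for each $x$, a scale-sequence adapted to the ``active'' Fourier piece at $x$ (something the definition of $\V^2$ permits), and then combine the Gauss-sum amplitudes, Khinchine's inequality applied to the Rademacher signs, and a pigeonhole argument across the $K$ pieces to conclude that $\EE\|\V^2(K_N * f)\|_{\ell^2}^2 \to \infty$ as $K \to \infty$. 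A standard derandomization (Fubini/pigeonhole) then extracts the explicit $f_C$.

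The principal obstacle is that the naive Fourier-multiplier bound $\sup_\xi \sup_{(N_k)} \sum_k |m_{N_{k+1}}(\xi) - m_{N_k}(\xi)|^2$ is in fact finite at every $\xi$, so the unboundedness of $\V^2$ cannot come from a single-frequency computation; it must arise precisely from the freedom of letting the scale sequence $(N_k)$ depend on $x$ \emph{inside} the $L^2$-integration. Making this $x$-adaptive gain rigorous requires careful control of both the cross-scale interactions between the Fourier pieces (via Weyl's inequality, the rapid growth of the $q_j$, and the disjointness of the major arcs, so that each piece ``feels'' essentially only its own transition) and the extent to which the optimal sequences at distinct $x$'s genuinely differ---the latter being the source of the divergent $K$-dependence.
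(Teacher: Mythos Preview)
Your proposal has a genuine gap at the step you yourself flag as the ``principal obstacle.'' As you describe it, each Fourier piece $\psi_j$ lives near its own rational $a_j/q_j$ and undergoes a single multiplier transition of amplitude $\asymp q_j^{-1/2}$ across the scales $M_j, 2M_j$. With the major arcs disjoint and the scales well separated (so there is no cross-interaction), the best lower bound your construction yields is
\[
\|\V^2(K_N*f)\|_{\ell^2}^2 \;\gtrsim\; \sum_{j=1}^K \|(m_{2M_j}-m_{M_j})\hat f\|_{L^2}^2 \;\asymp\; \sum_{j=1}^K q_j^{-1}\,c_j^2 \;\le\; \sum_{j=1}^K c_j^2 \;=\; O(1),
\]
and no choice of Rademacher signs or Khinchine argument improves this, since Khinchine only returns constants. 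The $x$-adaptivity of the scale sequence does not help either: once the pieces are Fourier-orthogonal and each contributes at most one bounded jump, the $2$-variation is uniformly bounded pointwise. In short, one jump per piece cannot produce divergence; you need a mechanism that extracts \emph{many} jumps from a fixed-norm function.

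The paper's argument supplies exactly that mechanism, and it is structurally different from yours. Rather than spreading mass across many rationals in $\ell^2(\Z)$, it works on a \emph{single} torus rotation $x\mapsto x+\alpha$ with $\alpha=2^{-R}$ and uses a lacunary trigonometric polynomial $f=\sum_{i=1}^L a_i\,e(2^{k_i}x)$. The point is that, for a carefully arranged pair of sequences $k_1>\cdots>k_L$ and $j_1<\cdots<j_L$, the averaging operator $K_{2^{j_l}}$ acts on $f$ essentially as the partial-sum projection $S_l f=\sum_{i\ge l} a_i\,e(2^{k_i}x)$: Weyl's inequality kills the high frequencies ($i<l$) and the mean-value estimate fixes the low ones ($i\ge l$), with total error $O(1)$ uniformly in $L$. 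The divergence is then imported wholesale from a result of Lewko--Lewko: there exist such $f$ with $\|f\|_2=1$ but $\|\V^2(S_l f)\|_2\gtrsim\log\log L$. It is this partial-sum combinatorics---not Gauss-sum amplitudes---that drives the blow-up, and it is absent from your construction.
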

\begin{remark}
Although this theorem is generalizable to actions associated to other integer-valued polynomials,
for the sake of clarity, we have contented ourselves with the case of the squares.
\end{remark}

This result will follow, by Calder\'{o}n's transference principle \cite{C}, from an analogous version on the torus system
\[(\TT, dx , T: x \mapsto x+ \alpha) \]
for $0 \leq \alpha < 1$; the Kakutani-Rokhlin Lemma \cite[Lemma 4.7]{P} may be then used to transfer the unboundedness of the variation operator to any aperiodic measure-preserving system.

\bigskip

The structure of the paper is as follows:

In $\S 2$ we introduce relevant definitions, and present a few reductions which will be used throughout;

In $\S 3$, we review long variation arguments, and sketch a proof of pointwise convergence along polynomial means;

In $\S 4$, we collect preliminary definitions and lemmas;

In $\S 5$, assuming the number-theoretic Proposition \ref{est}, we prove full variational estimates along the polynomial sequences (i.e. Proposition \ref{main2}) by studying the short variation;

In $\S 6$, we prove Proposition \ref{est};

In $\S 7$, we interpolate our result against Bourgain's theorem to obtain (partial) variational estimates for other $L^p$ spaces; and

In $\S 8$ we prove that our $L^2$-result is sharp -- that in general the $2$-variation associated to our polynomial averages is an unbounded operator.

\subsection{Acknowledgements}
The author would like to thank Lewis Bowen and Akos Magyar for helpful conversations, Michael Lacey for early encouragement, and his advisor, Terence Tao, for his great patience and support.

\subsection{Notation}
For a set $E \subset \Z$, we use $|E|$ to denote $\# E$ the counting measure (cardinality) of the set $E$. We also let $e(t):= e^{2\pi i t}$. For subsets of the torus, $D \subset \TT$, we denote the frequency projection onto $D$ of (finitely supported) functions on the integers  by
\[ f_D(n) := \left( \hat{f} \cdot 1_D \right)^\vee(n). \]

We will make use of the modified Vinogradov notation. We use $X \lesssim Y$, or $Y \gtrsim X$ to denote the estimate $X \leq CY$ for an absolute constant $C$. If we need $C$ to depend on a parameter,
we shall indicate this by subscripts, thus for instance $X \lesssim_p Y$ denotes
the estimate $X \leq C_p Y$ for some $C_p$ depending on $p$. We use $X \approx Y$ as
shorthand for $Y \leq X < 2Y $.

We also make use of big-O notation: we let $O(Y)$ denote a quantity that is $\lesssim Y$, and similarly $O_p(Y)$ a quantity that is $\lesssim_p Y$.

\section{Preliminaries}

For a sequence of functions, $\{f_i\}$, we may divide our study of the \emph{$r$-variation}, $\V^r(f_i)$, into the \emph{long-} and \emph{short-$r$ variation}, respectively defined below:
\[ \aligned
\V^{r,L}(f_i)(x) &:= \sup_{(i_k) \text{ increasing, dyadic}} \left( \sum_k |f_{i_k} - f_{i_{k+1}}|^r \right)^{1/r}(x) \\
\V^{r,S}(f_i)(x) &:= \left( \sum_{n} \left( \sup_{2^n \leq (i_k) \leq 2^{n+1} \text{ increasing }} \sum_k |f_{i_k} - f_{i_{k+1}}|^r \right) \right)^{1/r}(x).
\endaligned \]
Indeed, we have the following easy pointwise inequality:
\begin{lemma}[\cite{JW}, \S 3 ]
\[ \V^r(f_i) \lesssim_r \V^{r,L}(f_i)+\V^{r,S}(f_i).\]
\end{lemma}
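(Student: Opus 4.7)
The plan is, for fixed $x$ and any increasing sequence $(i_k)$, to split each consecutive pair $(i_k,i_{k+1})$ into two types based on whether the pair stays inside a single dyadic block or straddles a dyadic boundary, then bound the ``inside'' contributions by $\V^{r,S}$ and the ``straddling'' contributions by a three-term triangle-inequality decomposition whose middle terms telescope along a dyadic subsequence (controlled by $\V^{r,L}$) and whose two endpoint terms are again bounded by $\V^{r,S}$.

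Concretely, for each $k$ let $n_k$ be the integer with $i_k\in[2^{n_k},2^{n_k+1})$, and declare the pair \emph{Type A} if $n_k=n_{k+1}$ and \emph{Type B} if $n_k<n_{k+1}$. Grouping Type A pairs by block, the indices $i_k$ for fixed block form an increasing sequence in $[2^n,2^{n+1})$, so
\[
\sum_{k\text{ Type A}}|f_{i_{k+1}}-f_{i_k}|^r\ \le\ (\V^{r,S}(f_i)(x))^r.
\]
For Type B pairs I would write
\[
f_{i_{k+1}}-f_{i_k}=(f_{i_{k+1}}-f_{2^{n_{k+1}}})+(f_{2^{n_{k+1}}}-f_{2^{n_k}})+(f_{2^{n_k}}-f_{i_k}),
\]
apply $|a+b+c|^r\le 3^{r-1}(|a|^r+|b|^r+|c|^r)$, and sum. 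Since $(n_k)$ is weakly increasing and Type B transitions cause strict increases, the collections $\{n_k:k\text{ Type B}\}$ and $\{n_{k+1}:k\text{ Type B}\}$ are strictly increasing in $k$, so each dyadic block hosts at most one tail of each kind; hence
\[
\sum_B|f_{2^{n_k}}-f_{i_k}|^r,\ \sum_B|f_{i_{k+1}}-f_{2^{n_{k+1}}}|^r\ \le\ (\V^{r,S}(f_i)(x))^r.
\]
Between consecutive Type B indices $k_j<k_{j+1}$, all intervening pairs are Type A (so $n_{k_j+1}=n_{k_{j+1}}$), hence the middle terms telescope along the strictly increasing dyadic sequence $(2^{n_{k_j}})_j$ and
\[
\sum_B|f_{2^{n_{k+1}}}-f_{2^{n_k}}|^r=\sum_j|f_{2^{n_{k_{j+1}}}}-f_{2^{n_{k_j}}}|^r\ \le\ (\V^{r,L}(f_i)(x))^r.
\]

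Combining yields $\sum_k|f_{i_{k+1}}-f_{i_k}|^r\lesssim_r(\V^{r,S})^r+(\V^{r,L})^r$; taking $r$-th roots and the supremum over $(i_k)$ then gives the lemma. The only substantive step is the combinatorial bookkeeping needed to verify the distinct-block property for the two families of Type B tails and the chainability of the middle terms across Type A gaps; everything else reduces to the elementary power-mean inequality.
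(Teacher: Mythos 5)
Your proposal is correct and follows essentially the same route as the paper's sketch: split each consecutive pair according to whether it straddles a dyadic boundary, use the three-term triangle inequality on the straddling pairs, send the two endpoint pieces (and the within-block pairs) to the short variation, and observe that the middle pieces telescope along an increasing dyadic sequence controlled by the long variation. Your version simply makes explicit the bookkeeping (distinctness of the blocks hosting the Type B tails, chaining of the middle terms across Type A runs) that the paper leaves implicit.
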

\begin{proof}[Sketch]
Fix an increasing sequence of indices $\{i_k\}$. For each pair $(i_k,i_{k+1})$ so that there exists $n(k) < n(k+1)$ with
\[ 2^{n(k) -1 } \leq i_k \leq 2^{n(k)} < 2^{n(k+1)} \leq i_{k+1} \leq 2^{n(k+1)+1},\]
replace
\[ |f_{i_k} - f_{i_{k+1}}| \leq |f_{i_k} - f_{2^{n(k)}}| + |f_{2^{n(k)}} - f_{2^{n(k+1)}}| + |f_{i_{k+1}} - f_{2^{n(k+1)}}|,\]
so that
\[ |f_{i_k} - f_{i_{k+1}}|^r \lesssim_r |f_{i_k} - f_{2^{n(k)}}|^r + |f_{2^{n(k)}} - f_{2^{n(k+1)}}|^r + |f_{i_{k+1}} - f_{2^{n(k+1)}}|^r.\]
Take $l^r$-norms in $k$, and make the above replacements when necessary; the first and third replaced terms become absorbed by the short variation, while each middle term becomes absorbed by the long variation.
\end{proof}

\section{A Sketch of Bourgain's Argument, and the Long Variation}
In this section, we abbreviate the long variation $\V^{r,L}$ by $\V^r$.

We use the argument of \cite{B1}, combined with (an appropriately scaled version of) the variational result of \cite[Proposition 1.2]{BK2} (cf. also \cite[Proposition 4.1]{N1}). The square-function argument of \cite[\S 6]{B1} then is robust enough that one may replace the maximal function along dyadic scales
$\sup_{N \in 2^{\N}} |K_N *f|$ with the long variation $\V^r(K_N*f: N \in 2^{\N})$.

For the sake of clarity, we provide some details of the argument in the case of the squares
\footnote{In fact, in the case of the squares, the argument of \cite[\S\S 3-5]{B0} goes through essentially unchanged. Indeed, the only new observation is that, with $k(x):= \frac{1}{2\sqrt{x}} 1_{[0,1]}$ the quadratic density, not only does the family $\{k_t : t > 0\}$ satisfy the maximal inequality
\[ \| \sup_t |k_t*f| \|_{L^2(\RR)} \lesssim \| f\|_{L^2(\RR)}\]
but also the stronger variational inequality
\[ \| \V^r (k_t*f) \|_{L^2(\RR)} \lesssim \| f\|_{L^2(\RR)}, \]
$r>2$. Working on the spatial side, this follows from Bourgain's \cite[Lemma 3.11]{B1} and convexity; alternatively, one can work on the Fourier side and reduce matters to the more general \cite[Theorem 1.5]{JSW} using Stein's ``universal'' lifting argument \cite[Lemma 11.2.4]{S}.}
\[ K_N*f(x) := \frac{1}{N} \sum_{n \leq N} f(x+n^2).\]

The departure point is that by using techniques from the Hardy-Littlewood circle method, the multipliers
\[ \widehat{K_t}(\alpha) = \frac{1}{t} \sum_{n \leq t} e(-\alpha n^2) \]
can be well-approximated in the $L^2$ sense by a more tractable family of multipliers:
\[
\aligned
\widehat{L_t}(\alpha) &:= \sum_{s\geq 0} \widehat{L_{s,t}}(\alpha) \\
&:= \sum_{s \geq 0} \sum_{a/q \in \R_s} S(a/q) v_t(\alpha - a/q) \phi(10^s(\alpha - a/q)), \endaligned \]
where
\begin{itemize}
\item $1_{[-0.1,0.1]} \leq \phi \leq 1_{[-0.2,0.2]}$ is a smooth cut-off;
\item the sets $\{ \R_s \}$ form an exhaustion of the rationals inside $\TT$:
\[ \R_s := \{ a/q : (a,q) = 1, q \approx 2^s \} \]
(we identify $\R_0 = \{ 0/1 \equiv 1/1 \}$, and recall our convention, $X \approx Y$ means $Y \leq X < 2Y$);
\item the weights
\[ S(a/q) := \frac{1}{q} \sum_{r=1}^q e\left(-\frac{r^2}{q} \right), \]
satisfy $|S(a/q)| \lesssim q^{-\nu}$ for some $\nu > 0$ by Hua's \cite[\S 7, Theorem 10.1]{HUA}; and
\item the $v_t$ are oscillatory ``pseudo-projections''
\[ v_t(\beta) := \int_0^1 e(-\beta t^2 s^2) \ ds, \]
which satisfy the estimates
\[|v_t(\beta) - 1| \lesssim t^2|\beta|, \ \text{ and } |v_t(\beta)| \lesssim \frac{1}{t|\beta|^{1/2}} \]
 by the mean value theorem and van der Corput's estimate on oscillatory integrals.
\end{itemize}

These approximation techniques will reappear in our present context \S\S 4-5; we include a heuristic discussion of Bourgain's (and our) approach.

For each parameter $t$, Bourgain divided the torus into two distinct regions:
\begin{itemize}
\item The $t$-major arcs, which consist of points ``$t$-near'' rationals $a/q$ with ``$t$-small" denominators; and and their complements
\item the $t$-minor arcs.
\end{itemize}

On each $t$-major arc, when $\alpha \sim a/q$ lies near a rational with small denominator, Bourgain showed that
\[ \widehat{K_t}(\alpha) \sim S(a/q) v_t(\alpha - a/q),\]
where $v_t$ is a continuous (integral) analogue of the discrete exponential sum, and the weight $S(a/q)$ measures the lack of uniform distribution of the squares in the residue classes $\mod q$. On each $t$-minor arc, where $\alpha$ is ``$t$-far'' from rational numbers with small denominator, the exponential sum $\widehat{K_t}(\alpha)$ is ``$t$-negligible'' -- $\alpha$ lives too far from any rational $a/q$ which correlates sufficiently quickly to prevent $\widehat{K_t}(\alpha)$ from oscillating itself out of (moral) consideration.

Bourgain was able to quantify these heuristic ideas in his
\begin{lemma}[Lemma 6.14 of \cite{B1}]
There exists some $\nu > 0$ sufficiently small so that
\[ |\widehat{K_t}(\alpha) - \sum_{s\geq 0} \widehat{L_{s,t}}(\alpha) | \lesssim t^{-\nu}.\]
\end{lemma}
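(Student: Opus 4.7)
The plan is a Hardy–Littlewood circle method argument. Fix a small parameter $\nu_0 > 0$ to be chosen later, set $Q := t^{\nu_0}$, and split the torus as $\TT = \mathfrak{M} \cup \mathfrak{m}$, where
\[ \mathfrak{M} := \bigcup_{1 \leq q \leq Q} \bigcup_{(a,q)=1} \{ \alpha \in \TT : |\alpha - a/q| \leq t^{-2+10\nu_0} \}. \]
By Dirichlet's theorem, each $\alpha \in \mathfrak{m}$ admits a rational approximation $b/q'$ with $Q < q' \leq t^{2-10\nu_0}$ and $|\alpha - b/q'| \leq 1/(q' t^{2-10\nu_0})$. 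The strategy is to show that $\widehat{K_t}$ and $\sum_s \widehat{L_{s,t}}$ are separately $O(t^{-\nu})$ on the minor arcs $\mathfrak{m}$, while on the major arcs $\mathfrak{M}$ they agree up to an error of $O(t^{-\nu})$.

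On $\mathfrak{m}$, Weyl's inequality for the quadratic exponential sum immediately yields $|\widehat{K_t}(\alpha)| \lesssim t^{-\nu_1}$ for some $\nu_1 = \nu_1(\nu_0) > 0$. For the approximant, each $s$-piece is bounded using $|S(a/q)| \lesssim 2^{-\nu s}$ (Hua), $|v_t(\beta)| \lesssim (t|\beta|^{1/2})^{-1}$ (van der Corput), and the cutoff support $|\alpha - a/q| \leq 2 \cdot 10^{-s}$ from $\phi$; the $\gtrsim 2^{-2s}$-separation of Farey fractions of height $\approx 2^s$ then lets me sum over $(a,q) \in \R_s$ and geometrically over $s$. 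On $\mathfrak{M}$, write $\alpha = a/q + \beta$ with $q \approx 2^s$ and $|\beta| \leq t^{-2+10\nu_0}$; grouping $n = qm + r$ and using the $q$-periodicity of $e(-an^2/q)$ factors the multiplier as
\[ \widehat{K_t}(\alpha) = \frac{1}{t}\sum_{r=1}^{q} e(-ar^2/q) \sum_{m : \, qm+r \leq t} e(-\beta(qm+r)^2). \]
Applying Euler–Maclaurin (equivalently, Poisson summation, with only the zero-frequency mode being non-negligible because $|\beta| q t \ll 1$) replaces the inner sum by $\tfrac{1}{q}\int_0^t e(-\beta u^2)\,du = \tfrac{t}{q}v_t(\beta)$ up to acceptable error; averaging over $r$ produces exactly $S(a/q) v_t(\beta)$. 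The approximant matches this: the single term with $(a',q',s') = (a,q,s)$ contributes $S(a/q) v_t(\beta)$ since $|10^s \beta| \ll 1$, while every other $(a',q',s')$ is either killed by $\phi$ (because $|\alpha - a'/q'| \gtrsim 1/(qq')$ exceeds the cutoff width $10^{-s'}$ in the relevant ranges) or suppressed by the joint decay of $S$ and $v_t$.

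The \emph{main obstacle} is the calibration of $\nu_0$ so that three distinct polynomial savings — Weyl's bound on $\mathfrak{m}$, the Hua/van der Corput tail of $\sum_s \widehat{L_{s,t}}$ on $\mathfrak{m}$, and the Euler–Maclaurin remainder on $\mathfrak{M}$ — all acquire a common exponent $\nu$. A secondary delicate point is controlling the cross-interaction of cutoffs across scales $s \neq s'$ in the last matching step: demonstrating that these cross-contributions are negligible requires a dyadic decomposition in the distance $|\alpha - a'/q'|$ together with the Farey spacing $\gtrsim 1/(qq')$ and the $\phi$-support, but no genuinely new ideas beyond those already in play.
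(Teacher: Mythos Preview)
The paper does not supply its own proof of this lemma: it is quoted verbatim as Lemma~6.14 of \cite{B1}, with only a heuristic discussion preceding it (major/minor arc split; on major arcs $\widehat{K_t}(\alpha) \sim S(a/q)v_t(\alpha-a/q)$; on minor arcs $\widehat{K_t}$ is negligible). Your proposal is the standard circle-method argument and is correct; it matches both that heuristic outline and the detailed proof the paper gives in \S6 for the closely related Proposition~\ref{est} (the general-polynomial analogue), which likewise uses Dirichlet/Weyl on the minor arcs and the $n=qm+r$ factorization into Gauss sum times oscillatory integral on the major arcs.

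One minor remark on your cross-term handling: for the ``other'' summands $\widehat{L_{s',t}}$ on a major arc $\alpha \approx a/q$, you need both the case $2^{s'} \lesssim Q$ (where Farey spacing $|a/q - a'/q'| \gtrsim 1/(qq')$ beats the $\phi$-cutoff width $10^{-s'}$, so $\phi$ kills the term outright) and the tail $2^{s'} \gg Q$ (where the Gauss-sum decay $|S(a'/q')| \lesssim 2^{-\nu s'}$ alone sums geometrically to $O(Q^{-\nu}) = O(t^{-\nu_0 \nu})$, since $|v_t|,|\phi| \leq 1$). You gesture at both, but in the tail regime the $\phi$-support and van der Corput bound are not what carry the day --- it is purely the Hua/Gauss-sum decay summed over $s'$. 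This is a bookkeeping point, not a gap.
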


For our purposes, for any lacunary constant $\sigma > 1$, and $\|f\|_2 = 1$, we may estimate
\[ \aligned
\| \V^r(K_t*f : t \in \sigma^{\N} ) \|_2 &\leq
\| \V^r(L_t*f : t \in \sigma^{\N} ) \|_2 + \left\| \left( \sum_{k} | K_{\sigma^k}*f - L_{\sigma^k}*f|^2 \right)^{1/2} \right\|_2 \\
&\leq \sum_{s \geq 0}
\| \V^r(L_{s,t}*f : t \in \sigma^{\N} ) \|_2 + \sum_{k\geq 0} (\sigma^{-\nu})^k \\
&\leq \sum_{s \geq 0}
\| \V^r(L_{s,t}*f : t \in \sigma^{\N} ) \|_2 + \frac{1}{1 - \sigma^{-\nu}} . \endaligned \]

Bourgain's next idea was to replace the ``pseudo-projective'' multipliers
\[ \widehat{L_{s,t}}(\alpha) := \sum_{a/q \in \R_s} S(a/q) v_t(\alpha-a/q) \phi(10^s(\alpha - a/q)) \]
with more honestly projective ones
\[ \widehat{L'_{s,t}}(\alpha) := \sum_{a/q \in \R_s} S(a/q) 1_{[-1,1]}(t^2(\alpha-a/q)) \phi(10^s(\alpha - a/q)). \]
Indeed, using our ``pseudo-projective'' estimates on $v_t$, and our bound on $S(a/q)$, we may upper bound each
\[ \aligned
\| \V^r(L_{s,t}*f : t \in \sigma^{\N} ) \|_2 &\leq
\| \V^r(L'_{s,t}*f : t \in \sigma^{\N} ) \|_2 + \left\| \left( \sum_{k} | L_{\sigma^k}*f - L'_{\sigma^k}*f|^2 \right)^{1/2} \right\|_2 \\
&\lesssim \| \V^r(L'_{s,t}*f : t \in \sigma^{\N} ) \|_2 + 2^{-s \nu} \cdot \frac{1}{\sigma - 1}, \endaligned \]
and thus
\[
\sum_{s \geq 0}
\| \V^r(L_{s,t}*f : t \in \sigma^{\N} ) \|_2
\lesssim
\sum_{s \geq 0}
\| \V^r(L'_{s,t}*f : t \in \sigma^{\N} ) \|_2 + \frac{1}{\sigma -1 }.
\]
Define now the multiplier
\[ \widehat{ B_{s,t} g } := \sum_{a/q \in \R_s} 1_{[-1,1]}(t^2(\alpha - a/q)) \hat{g}( \alpha ).\]
With 
\[ \hat{g_s}(\alpha) := \hat{f}(\alpha) \cdot \sum_{a/q \in \R_s} S(a/q) \phi(10^s(\alpha - a/q))\]
defined via the Fourier transform, so
\[ \|g_s\|_2 \leq 2^{-s\nu} ,\]
 we have
\[ \widehat{ B_{s,t} g_s } \equiv \widehat{ L'_{s,t} f }.\]
Now, suppose we knew that for each $g$,
\[ (*) \; \; \; \; \; \;  \| \V^r( \widehat{ B_{s,t} g } : t \in \sigma^{\N} ) \|_2 \lesssim \left( \frac{r}{r-2} \right)^2 s^2 \cdot \frac{1}{\sigma -1} \|g\|_2;\]
then we could conclude
\[ \aligned
\sum_{s \geq 0}
\| \V^r(L'_{s,t}*f : t \in \sigma^{\N} ) \|_2 &\equiv \sum_{s \geq 0}
\| \V^r(B_{s,t}*g_s : t \in \sigma^{\N} ) \|_2 \\
&\lesssim \frac{1}{\sigma -1} \cdot \left( \frac{r}{r-2} \right)^2 \cdot \sum_{s\geq 0} s^2 \| g_s\|_2 \\
&\lesssim \frac{1}{\sigma -1} \cdot \left( \frac{r}{r-2} \right)^2 \sum_{s \geq 0} s^2 2^{-s\nu} \\
&\lesssim \frac{1}{\sigma -1} \cdot \left( \frac{r}{r-2} \right)^2, \endaligned\]
from which it follows that
\[ \| \V^r(K_t*f : t\in \sigma^{\N} ) \|_2 \lesssim \left( \left(\frac{r}{r-2} \right)^2 \cdot  \frac{1}{\sigma - 1} \cdot \frac{1}{1 - \sigma^{-\nu} } \right).\]

It remains to prove the following (slightly more general) proposition, in the spirit of \cite[\S 4]{B1}:

\begin{proposition}\label{entropy}
Suppose $\lambda_1 < \dots < \lambda_N \subset \TT$ are $\tau$-separated frequencies: $|\lambda_i - \lambda_j| > \tau$ for $i \neq j$.
For $k$ so that $\sigma^{-k} < \frac{1}{100} \tau$ (say) let
\[ R_k:= \{ \alpha \in \TT : |\alpha - \lambda_j| \leq \sigma^{-k} \},\]
and define
\[ \Delta^{\TT}_kf(n):= \left( 1_{R_k} \hat{f} \right)^{\vee}(n).\]
Then
\[ \| \V^r( \Delta^{\TT}_k f ) \|_{l^2} \lesssim \left( \frac{r}{r-2} \log N \right)^2 \cdot \frac{1}{\sigma-1} \|f\|_{l^2}.\]
\end{proposition}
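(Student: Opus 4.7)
The plan is to adapt Bourgain's metric-entropy / Rademacher--Menshov argument from \cite[\S 4]{B1}, which establishes the analogous maximal inequality with the same $(\log N)^2$ dependence on the cardinality of the frequency set, and to upgrade it to a variational estimate by inserting a jump/L\'epingle-type inequality that supplies the factor $\frac{r}{r-2}$. Write $\Delta_k := \Delta_k^{\TT}$ throughout.

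The starting structural observation is orthogonality. Since the $\lambda_j$ are $\tau$-separated and $\sigma^{-k} < \tau/100$, the set $R_k = \bigcup_j \{\alpha : |\alpha - \lambda_j| \le \sigma^{-k}\}$ is a \emph{disjoint} union of $N$ arcs of radius $\sigma^{-k}$, and the family $\{R_k\}_k$ is nested and decreasing. Consequently the annular differences $(\Delta_k - \Delta_{k+1})f$ project onto pairwise disjoint frequency shells and are $l^2$-orthogonal, yielding the basic estimate
\[
\sum_k \|(\Delta_k - \Delta_{k+1})f\|_{l^2}^2 \le \|f\|_{l^2}^2. \qquad (\ast)
\]

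Using the long/short split from $\S 2$, I would first reduce to a $\sigma$-lacunary sub-sequence of scales: the $\frac{1}{\sigma - 1}$ in the target bound will then account for the $\log_\sigma$-length of each lacunary block, so that both the long and short variation pieces become variational estimates over a fixed lacunary sequence of scales. For such a lacunary sequence $k_1 < k_2 < \dots$, the relevant differences $\Delta_{k_i}f - \Delta_{k_{i+1}}f = \sum_{l=k_i}^{k_{i+1}-1}(\Delta_l - \Delta_{l+1})f$ are telescoping sums of orthogonal annular increments, and a Rademacher--Menshov argument combined with $(\ast)$ controls the maximal function by a logarithmic factor in the number of effective scales times the $l^2$ square function built from the increments. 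Upgrading this to a $\V^r$-estimate via the variational jump inequality \cite[Lemma 3.11]{B1} contributes the $\frac{r}{r-2}$ gain for $r > 2$.

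The hard part will be producing the correct $(\log N)^2$ dependence while handling the \emph{a priori} unbounded number of scales. Although $k$ runs over an infinite range, the orthogonality $(\ast)$ only supplies $\|f\|_{l^2}^2$ worth of total mass, to be distributed among the $N$ frequency points. Realizing this distribution requires constructing a nested dyadic partition of the configuration $\{\lambda_1, \dots, \lambda_N\}$ of depth $\sim \log N$, and running Rademacher--Menshov a second time on this spatial-frequency partition rather than only on the scale parameter. One factor of $\log N$ then arises from each of the two chaining levels, producing the final $(\log N)^2$; the $\frac{r}{r-2}$ enters independently through the jump inequality. This two-fold chaining is the heart of Bourgain's metric-entropy argument in \cite[\S 4]{B1}, and the delicate part of the verification is bookkeeping these two logarithms without further losses.
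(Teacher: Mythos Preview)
Your outline takes a genuinely different route from the paper's, and it also misidentifies where the factor $\frac{1}{\sigma-1}$ comes from.

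The paper does not run a Rademacher--Menshov / metric-entropy argument directly on the sharp projections $\Delta_k^{\TT}$. Instead it (i) transfers the problem from $\TT$ to $\RR$ via \cite[Lemma 4.4]{B1}, reducing to $1$-separated real frequencies $\xi_1,\dots,\xi_N$; (ii) replaces each sharp multiplier $1_{R_k}$ by a smooth dilate $\sum_j \hat\phi(\sigma^k(\xi-\xi_j))$ with $\phi$ Schwartz, producing operators $D_k$; and (iii) quotes the variational inequality $\|\V^r(D_k f)\|_2 \lesssim \bigl(\tfrac{r}{r-2}\log N\bigr)^2 \|f\|_2$ from \cite[\S 2]{BK2} as a black box, with \emph{no} dependence on $\sigma$. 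The factor $\frac{1}{\sigma-1}$ enters only in step (ii), and as an \emph{additive} error: for each fixed frequency $\xi$ the sharp and smooth cutoffs disagree only for the $O\bigl((\sigma-1)^{-1}\bigr)$ values of $k$ for which $\xi$ sits in the transition annulus of $R_k$, whence $\bigl\|\bigl(\sum_k|\Delta_kf-D_kf|^2\bigr)^{1/2}\bigr\|_2 \lesssim (\sigma-1)^{-1}\|f\|_2$ by Plancherel. The multiplicative bound in the statement is then just a (wasteful) repackaging of this additive estimate.

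Your account of $\frac{1}{\sigma-1}$ via a long/short split does not hold up as written: the family $\{\Delta_k\}$ is already indexed by integers $k$ with radii $\sigma^{-k}$, so there is no continuum of scales to chop into lacunary blocks, and the phrase ``reduce to a $\sigma$-lacunary sub-sequence'' is vacuous---you are already at one. Your orthogonality observation $(\ast)$ is correct, and your two-level chaining sketch for the $(\log N)^2$ is plausibly the content of the black box \cite{BK2}; but note that the paper smooths first precisely because the multi-frequency variational machinery is cleaner for Schwartz dilates than for characteristic functions, and pays $(\sigma-1)^{-1}$ once as the price of smoothing rather than threading a $\sigma$-dependent loss through the chaining.
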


In particular, taking $\{ \lambda_1,\dots, \lambda_N \}$ to be the $\lesssim 4^s$ elements of $\R_s$, and $R_k$ the pertaining $\sigma^{-k}$-neighborhood, we find that
\[ \aligned 
\| \V^r(B_{s,t}*g_s : t \in \sigma^{\N} ) \|_2 &\lesssim \frac{1}{\sigma -1} \cdot \left( \frac{r}{r-2} \right)^2 \left( \log | \R_s | \right)^2 \|g_s \|_2 \\
&\lesssim
\frac{1}{\sigma -1} \cdot \left( \frac{r}{r-2} \right)^2 s^2 \|g_s \|_2, \endaligned\]
proving $(*)$. Indeed, upon establishing Proposition \ref{entropy}, we will
have proven the following:
\begin{proposition}\label{lac}
With $K_N*f(x):= \frac{1}{N} \sum_{n \leq N} f(x +P(n))$ as above, for any $r>2$
\[ \| \V^r (K_N*f: N \in 2^{\N})  \|_{l^2} \lesssim_{r,P} \|f\|_{l^2}.\]
\end{proposition}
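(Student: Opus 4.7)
The plan is to follow verbatim the strategy laid out in the preceding sketch for the squares, replacing the number-theoretic and oscillatory inputs by their polynomial analogues. The three workhorses we need are: a Weyl--Hua-type estimate for the complete Gauss sums along $P$, a van der Corput estimate for the continuous analogue of the polynomial exponential sum, and the metric-entropy Proposition \ref{entropy}.

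First, for each dyadic scale $t \in 2^{\N}$, we apply the Hardy--Littlewood circle method to the multiplier
\[ \widehat{K_t}(\alpha) = \frac{1}{t} \sum_{n \leq t} e(-\alpha P(n)) \]
to produce a polynomial Bourgain-type approximation $\widehat{L_t} = \sum_{s \geq 0} \widehat{L_{s,t}}$, with
\[ \widehat{L_{s,t}}(\alpha) = \sum_{a/q \in \R_s} S_P(a/q)\, v_{P,t}(\alpha - a/q)\, \phi(C^s(\alpha - a/q)), \]
where $S_P(a/q) := q^{-1}\sum_{r=1}^{q} e(-a P(r)/q)$, $v_{P,t}(\beta) := \int_0^1 e(-\beta P(ts))\, ds$, and $C$ is a large constant depending on $P$ ensuring disjointness of the cutoffs. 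The classical bound $|S_P(a/q)| \lesssim_P q^{-\nu}$ for some $\nu = \nu(d)>0$ (Weyl's inequality), together with the van der Corput estimates $|v_{P,t}(\beta) - 1| \lesssim_P t^d|\beta|$ and $|v_{P,t}(\beta)| \lesssim_P (t^d|\beta|)^{-1/d}$ (valid on the support of $\phi(C^s\,\cdot\,)$), yield the approximation lemma
\[ \bigl|\widehat{K_t}(\alpha) - \widehat{L_t}(\alpha)\bigr| \lesssim_P t^{-\nu'}, \qquad \nu' = \nu'(P) > 0. \]
This is the statement we will isolate as Proposition \ref{est} and prove in \S 6.

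Second, exactly as in the sketch for the squares, an $l^2$-control on the approximation error summed as a square function over $t \in 2^{\N}$ reduces matters to bounding each $\|\V^r(L_{s,t}*f : t \in 2^{\N})\|_{l^2}$. A further Littlewood--Paley-type comparison replaces the oscillatory factor $v_{P,t}(\alpha - a/q)$ by the sharp cutoff $1_{[-1,1]}(t^d(\alpha - a/q))$, with error of size $2^{-s\nu}$ via the van der Corput bound and a square-function estimate. After pulling off the rapidly-decaying weights $S_P(a/q)\phi(C^s(\alpha-a/q))$ into an auxiliary $g_s$ with $\|g_s\|_{l^2} \lesssim_P 2^{-s\nu}\|f\|_{l^2}$, the remaining bound is directly of the form treated by Proposition \ref{entropy}, applied to the $|\R_s| \lesssim 4^s$ points of $\R_s$ (which are $2^{-O(s)}$-separated) with appropriate sharp-cutoff neighborhoods at dyadic scale $t$. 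This produces an $O\bigl((r/(r-2))^2 \, s^2\bigr)$-loss, which is geometrically summable against $2^{-s\nu}$, and yields Proposition \ref{lac}.

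The main obstacle is the polynomial approximation lemma itself: although the individual ingredients (Weyl for $|S_P(a/q)|$, van der Corput for $v_{P,t}$) are classical, the circle-method splitting into ``$t$-near rationals with $t$-small denominators'' major arcs and complementary minor arcs has to be engineered so that the minor-arc contribution beats a power $t^{-\nu'}$ and so that the major-arc decomposition remains disjoint across the scales $s$. This bookkeeping is more delicate than in the squares case because the van der Corput gain drops from $1/2$ to $1/d$, forcing the minor-arc threshold to be re-tuned; however, once this uniform approximation is in hand, the remainder of the argument is a direct transcription of Bourgain's square-function scheme sketched above.
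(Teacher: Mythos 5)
Your proposal is correct and follows essentially the same route as the paper: the circle-method approximation of $\widehat{K_t}$ by the weighted pseudo-projections $\widehat{L_{s,t}}$ (the paper simply cites Bourgain's Lemma 6.14 for this rather than re-proving it), the square-function comparison replacing $v_t$ by sharp cutoffs at the cost of a geometrically decaying factor $2^{-s\nu}$, and the application of the metric-entropy Proposition \ref{entropy} to the $\lesssim 4^s$ separated points of $\R_s$. The only cosmetic discrepancy is that the uniform approximation estimate you propose to isolate is not the paper's Proposition \ref{est} (which concerns the short variation); otherwise the argument is a faithful transcription of \S 3.
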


And moreover

\begin{cor}
For $r > 2$, $\sigma>1$, maintaining the above notation,
\[ \| \mathcal{V}^r (K_N*f : N \in \sigma^{\N} ) \|_{L^2(X)} \lesssim_{r,P} \left( \frac{1}{\sigma -1} \cdot \frac{1}{1 - \sigma^{-\nu}} \right) \|f \|_{L^2(X)}. \]
In particular, the means $\{ K_N*f(x) : N \in \lfloor \sigma^n \rfloor\}$ converge pointwise almost everywhere.
\end{cor}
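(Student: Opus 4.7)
The asserted bound is essentially already in hand: the three-step approximation sketched above treats $\sigma > 1$ as a free parameter throughout, and Proposition \ref{lac} was merely the specialization $\sigma = 2$. My plan is to run through the same sequence of estimates while tracking the $\sigma$-dependence explicitly, and then to read off pointwise convergence from finiteness of the $r$-variation.

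I would proceed in three stages. First, invoke Bourgain's approximation $|\widehat{K_t}(\alpha) - \widehat{L_t}(\alpha)| \lesssim t^{-\nu}$ at $t = \sigma^k$; this bounds the square-function error on $l^2$ by $\sum_{k \geq 0} \sigma^{-k\nu} = (1 - \sigma^{-\nu})^{-1}$. Second, split $\widehat{L_t} = \sum_{s \geq 0} \widehat{L_{s,t}}$ and, for each $s$, pass from the ``pseudo-projective'' multiplier $L_{s,t}$ to the honestly projective $L'_{s,t}$; the van der Corput estimate on $v_t$ together with Hua's bound $|S(a/q)| \lesssim q^{-\nu}$ produce a square-function error of size $2^{-s\nu}/(\sigma-1)$, which is summable in $s$. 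Third, write $L'_{s,t} f = B_{s,t} g_s$ with $\|g_s\|_{l^2} \lesssim 2^{-s\nu} \|f\|_{l^2}$, and feed the $\lesssim 4^s$ frequencies in $\mathcal{R}_s$ into Proposition \ref{entropy}; this yields
\[ \|\mathcal{V}^r(B_{s,t} g_s : t \in \sigma^{\N})\|_{l^2} \lesssim \frac{1}{\sigma - 1} \left(\frac{r}{r-2}\right)^2 s^2 \|g_s\|_{l^2}. \]
Summing against the decay $2^{-s\nu}$ absorbs the $s^2$ factor and delivers a total of order $(\sigma - 1)^{-1} (r/(r-2))^2$, which when combined with the error from the first stage produces the claimed bound on $\|\mathcal{V}^r(K_N \ast f : N \in \sigma^{\N})\|_{L^2}$ via Calder\'{o}n transference.

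The pointwise convergence statement is then automatic: since the $L^2$-norm of $\mathcal{V}^r(K_N f : N \in \sigma^{\N})$ is finite, this variation is $\mu$-a.e.\ finite, and any numerical sequence of finite $r$-variation for some $r < \infty$ is necessarily Cauchy, hence convergent. I do not anticipate any real obstacle at the level of this corollary: the only nontrivial input --- the metric-entropy estimate of Proposition \ref{entropy} --- lies upstream and is not re-proved here; the remaining work is just bookkeeping of the geometric series in $\sigma^{-\nu}$ and $2^{-s\nu}$.
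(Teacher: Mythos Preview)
Your proposal is correct and follows the paper's approach essentially verbatim: the paper carries out exactly this three-stage argument in \S 3 with $\sigma>1$ as a free parameter throughout, arriving at the bound $\lesssim \left(\frac{r}{r-2}\right)^2 \cdot \frac{1}{\sigma-1} \cdot \frac{1}{1-\sigma^{-\nu}}$ just before stating Proposition \ref{lac} and the Corollary, which are then recorded without further proof. Your reading of the pointwise convergence from finiteness of the $r$-variation is likewise the intended (and standard) deduction.
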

Since $\sigma$ can be taken arbitrarily close to one, and general means differ from the $\sigma$-lacunary means by a multiplicative factor of at most $\sigma$, this result is enough to recover the $L^2$-version of the following result:
\begin{theorem}[\cite{B1}, Theorem 5]
For any measure-preserving system $(X,\Sigma,\mu,\tau)$ and any function $f \in L^2(X)$, the means
\[ \left\{ \frac{1}{N} \sum_{n \leq N} \tau^{P(n)} f(x) \right\} \]
converge pointwise $\mu$-a.e.
\end{theorem}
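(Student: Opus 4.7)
The plan is to upgrade the almost-everywhere convergence along $\sigma$-lacunary subsequences (established by the preceding Corollary for every $\sigma > 1$) to almost-everywhere convergence along the full sequence of integers. The mechanism is the standard squeeze between the value of $M_N f$ at a general $N$ and the values at the two consecutive lacunary terms bracketing it, followed by an exhaustion through $\sigma \to 1^+$.

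First, by Calder\'on's transference principle it suffices to work on $\Z$ with counting measure, where the preceding Corollary directly provides, for each $\sigma > 1$, a conull set $E_\sigma$ on which the limit $\phi^\sigma(x) := \lim_{k \to \infty} M_{\lfloor \sigma^k \rfloor} f(x)$ exists. By splitting $f = f^+ - f^-$ I may assume $f \geq 0$. Setting $N_k := \lfloor \sigma^k \rfloor$ and exploiting nonnegativity of each $\tau^{P(n)} f$, for every $N$ with $N_k \leq N \leq N_{k+1}$ I have
\[ \frac{N_k}{N_{k+1}} \, M_{N_k} f(x) \;\leq\; M_N f(x) \;\leq\; \frac{N_{k+1}}{N_k} \, M_{N_{k+1}} f(x). \]
Since $N_{k+1}/N_k \to \sigma$, letting $k \to \infty$ yields on $E_\sigma$ the two-sided bound
\[ \sigma^{-1}\, \phi^\sigma(x) \;\leq\; \liminf_{N \to \infty} M_N f(x) \;\leq\; \limsup_{N \to \infty} M_N f(x) \;\leq\; \sigma\, \phi^\sigma(x). \]

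Next, I apply this with $\sigma_j := 1 + 1/j$ and intersect the countable family of conull sets $E_{\sigma_j}$. On the intersection, pairing the squeezes for indices $j$ and $j'$ gives $\sigma_j^{-1} \phi^{\sigma_j} \leq \sigma_{j'} \phi^{\sigma_{j'}}$, hence $(\sigma_j \sigma_{j'})^{-1} \leq \phi^{\sigma_j}/\phi^{\sigma_{j'}} \leq \sigma_j \sigma_{j'}$ whenever the denominators are positive. Combined with the almost-everywhere finiteness of $\phi^{\sigma_1}$---itself a consequence of the $L^2$-boundedness of the lacunary maximal function implied by the variational estimate of the Corollary---this shows that $\{\phi^{\sigma_j}\}$ is pointwise Cauchy and converges almost everywhere to some limit $\phi$. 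Passing $j \to \infty$ in the squeeze then forces $\liminf_{N} M_N f = \limsup_{N} M_N f = \phi$ almost everywhere, which is the desired conclusion.

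The argument presents no genuine obstacle: the substantive analytic work has been absorbed into Proposition \ref{entropy} and the resulting variational bound of the Corollary, and what remains are routine manipulations of almost-everywhere limits, a use of Calder\'on transference, and the choice of a countable exhaustion $\sigma_j \searrow 1$. The only minor point to verify is that the decomposition $f = f^+ - f^-$ is compatible with the squeeze (it is, since linearity of $M_N$ reduces the signed case to the nonnegative one).
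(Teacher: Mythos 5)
Your proposal is correct and follows exactly the route the paper intends: the paper disposes of this theorem with the one-line remark that ``general means differ from the $\sigma$-lacunary means by a multiplicative factor of at most $\sigma$,'' and your squeeze $\frac{N_k}{N_{k+1}}M_{N_k}f \leq M_Nf \leq \frac{N_{k+1}}{N_k}M_{N_{k+1}}f$ for $f \geq 0$, together with the exhaustion $\sigma_j \searrow 1$ over a countable intersection of conull sets, is precisely the standard fleshing-out of that remark. The only cosmetic wobble is the opening appeal to transference ``to work on $\Z$'': the Corollary already delivers a.e.\ convergence of the lacunary means on the measure-preserving system $X$ itself, and your squeeze argument should simply be run there verbatim.
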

\begin{remark}
The following theorem in fact holds for $f \in L^p(X), \ p >1$: provided that $f$ has ``mild enough'' singularities, polynomial means converge $\mu$-a.e. This result was later proven to be sharp \cite{BM}.
\end{remark}

We establish Proposition \ref{entropy} in the following subsection.

\subsection{Proof of Proposition \ref{entropy}}
By arguing as in \cite[Lemma 4.4]{B1} (see also \cite[Lemma 5]{T}), it is enough to prove the analogous result on $\RR$, where we may take advantage of the dilation structure, and prove the analogous result under the hypothesis that the frequencies are $1$-separated.

\begin{lemma}\label{R}
Suppose $\xi_1 < \dots < \xi_N \subset \RR$ are $1$-separated, and similarly define for $k$ so large that $\sigma^{-k} \leq \frac{1}{100}$
\[ \Delta_kf(x):= \left( 1_{R_k} \hat{f} \right)^{\vee}(x),\]
where now $R_k:= \{ \xi : |\xi - \xi_j| \leq \sigma^{-k} \text{ for some j} \}$.
Then
\[ \| \V^r( \Delta_k f ) \|_2 \lesssim \left( \frac{r}{r-2} \log N \right)^2 \cdot \frac{1}{\sigma-1} \|f\|_2.\]
\end{lemma}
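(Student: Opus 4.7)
My plan is to follow the metric-entropy strategy of Bourgain \cite[\S 4]{B1}, exploiting the orthogonality provided by the disjoint Fourier supports of the consecutive differences $\Delta_k f - \Delta_{k+1} f$ together with Rademacher-Menshov type square-function inequalities. The key is that the $(\log N)^2$ loss will arise from two separate logarithmic sources --- one from Rademacher-Menshov across the scale parameter $k$, and one from a Rubio-de-Francia type estimate across the $N$ frequencies --- each carrying a $\tfrac{r}{r-2}$ factor that squares to give the stated constant.

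First I would observe that since the $\xi_j$ are $1$-separated and $\sigma^{-k} \leq \tfrac{1}{100}$, the set $R_k$ is a disjoint union of the $N$ intervals of half-length $\sigma^{-k}$ centered at $\xi_j$, and the annular regions $R_k \setminus R_{k+1}$ are pairwise disjoint across $k$. By Plancherel this gives
\[
\sum_k \bigl\|\Delta_k f - \Delta_{k+1} f\bigr\|_2^2 \;\leq\; \|f\|_2^2.
\]
Truncating $k$ to a dyadic window of length $2^J$ with $J \sim \log N$ and applying the Rademacher-Menshov inequality
\[
\V^2(\Delta_k f) \;\lesssim\; \sum_{j=0}^{J} \Bigl(\sum_{l} |\Delta_{(l+1)2^j}f - \Delta_{l \cdot 2^j} f|^2 \Bigr)^{1/2},
\]
each inner square function has $L^2$-norm at most $\|f\|_2$ by the above orthogonality, so $\|\V^2\|_{L^2} \lesssim \log N \cdot \|f\|_2$. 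Beyond this window the annular widths become small compared to the frequency separation, and the tail variation telescopes geometrically across shrinking annuli, contributing the factor $\tfrac{1}{\sigma-1}\|f\|_2$.

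To upgrade from $\V^2$ to $\V^r$ for $r > 2$, I would use a L\'epingle-type pointwise interpolation $\V^r(a_k) \leq (2\sup_k|a_k|)^{(r-2)/r}\, \V^2(a_k)^{2/r}$ combined with a sharp form of the Rubio-de-Francia maximal inequality for projections onto $N$ disjoint intervals, which produces the second factor of $\tfrac{r}{r-2}\log N$. H\"older in $L^2$ then yields the combined bound $\bigl(\tfrac{r}{r-2}\log N\bigr)^2 \cdot \tfrac{1}{\sigma - 1}\|f\|_2$. The main obstacle, as I see it, is the careful constant tracking across the two logarithmic sources: in particular, one must use the $\V^r$-adapted version of Rademacher-Menshov rather than $\V^2$ followed by interpolation in order to obtain the sharp $\tfrac{r}{r-2}$ constant, and one must cleanly separate the ``core'' window $k \in [k_0, k_0 + 2^J]$ from the geometric tail in $k$ in order to isolate the $\tfrac{1}{\sigma-1}$ factor from the $(\log N)^2$ factor.
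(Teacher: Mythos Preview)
Your plan diverges from the paper's and contains a genuine gap. The paper's proof does not run Rademacher--Menshov in the scale parameter $k$ at all: it replaces the sharp projections $\Delta_k$ by smooth analogues $D_k f := \sum_j e(\xi_j x)\,\int \hat f(\xi+\xi_j)\hat\phi_k(\xi)e(\xi x)\,d\xi$ built from dilates of a fixed bump $\hat\phi$, bounds the discrepancy square-function by the pointwise multiplier estimate $\sup_\xi \sum_k \bigl|1_{R_k}(\xi) - \sum_j \hat\phi_k(\xi-\xi_j)\bigr| \lesssim (\sigma-1)^{-1}$ (this sparsification is the sole source of the $(\sigma-1)^{-1}$), and then invokes the multi-frequency variational estimate of \cite{BK2} for the smooth family $\{D_k\}$. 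That cited result is an entropy / multi-frequency Calder\'on--Zygmund argument across the $N$ centers $\xi_1,\dots,\xi_N$, and is where both factors of $\tfrac{r}{r-2}\log N$ arise; the scale index $k$ plays no role in producing the $\log N$.

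Your argument misattributes the logarithm. Rademacher--Menshov in $k$ yields $\log K$, with $K$ the number of scales retained, and $K$ bears no a~priori relation to $N$. The proposed fix --- truncate to a window of $\sim N$ scales and claim the tail ``telescopes geometrically'' with cost $(\sigma-1)^{-1}\|f\|_2$ --- does not hold: the annular widths are \emph{already} small compared to the frequency separation for every admissible $k$ (that is the hypothesis $\sigma^{-k}\leq \tfrac{1}{100}$), so nothing changes structurally beyond your window; and the orthogonality $\sum_k \|\Delta_k f - \Delta_{k+1} f\|_2^2 \leq \|f\|_2^2$ controls only the $L^2$ square-function, not the pointwise variation over any tail (the consecutive differences are orthogonal in $L^2$, not pointwise --- which is precisely why Rademacher--Menshov costs a logarithm in the first place). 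The second step is also off: since $\V^r \leq \V^2$ pointwise for $r \geq 2$, the interpolation $\V^r \leq (2\sup)^{(r-2)/r}(\V^2)^{2/r}$ can only \emph{decrease} any bound you have for $\V^2$, and $\sup_k |\Delta_k f|$ is trivially bounded on $L^2$ with constant $1$ (each $\Delta_k$ is an orthogonal projection), so no second $\log N$ emerges there. The $(\tfrac{r}{r-2}\log N)^2$ has to come from the multi-frequency structure across the centers $\xi_1,\dots,\xi_N$, as in \cite{BK2} or \cite{N1}; it does not emerge from the scale direction.
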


\begin{proof}[Proof of Lemma \ref{R}]
We begin with a reduction:

Let $1_{[-1/2,1/2]} \leq \hat{\phi} \leq 1_{[-1,1]}$ be a smooth function,
\[ \phi_k := \frac{1}{\sigma^k} \phi(\frac{x}{\sigma^{k}}),\]
and define the operators
\[ \aligned
D_k f &:= \sum_{j=1}^N \int \hat{f}(\xi) \hat{\phi_k}(\xi - \xi_j) e(\xi x) \ d\xi \\
&=
\sum_{j=1}^N e(\xi_j x) \int \hat{f}(\xi+ \xi_j) \hat{\phi_k}(\xi) e(\xi x) \ d\xi. \endaligned\]

We may majorize
\[ \V^r(\Delta_kf) \leq \V^r(D_kf) + \left( \sum_k |\Delta_kf - D_k f|^2 \right)^{1/2},\]
and using the separation hypothesis, we see that
\[ \|  \left( \sum_k |\Delta_kf - D_k f|^2 \right)^{1/2} \|_2 \lesssim \frac{1}{\sigma-1},\]
since
\[ \sup_\xi \sum_k |1_{R_k}(\xi) - \sum_{j=1}^N \hat{\phi}(\xi-\xi_j) | \lesssim \frac{1}{\sigma-1} \]
by sparsification.

In particular, we have
\[ \| \V^r(\Delta_kf) \|_2 \leq \| \V^r(D_kf) \|_2 + \frac{1}{\sigma-1}\|f\|_2.\]

But the arguments of \cite[\S 2]{BK2} show that
\[ \| \V^r(D_kf) \|_2 \lesssim \left( \frac{r}{r-2} \log N \right)^2 \|f\|_2,\]
which proves the lemma, and with it, Proposition \ref{entropy}.
\end{proof}

This concludes our treatment of the Long Variation result in the case of the squares.

The case of general polynomial averages follows a similar argument; the only modifications are to the definitions of the weights and the ``pseudo-projections.'' See \cite[\S \S 5-6]{B1} or \S 4 below for the appropriate generalizations.

\section{The Short Variation}
In this section, we set the stage for a proof that $\V^{r,S}$, and therefore $\V^r$ itself (see above), is $L^2$-bounded. Henceforth, we abbreviate the short variation $\V^{r,S}$ by $\V^r$;

\subsection{Fourier Preliminaries}
This is an $L^2$-problem, so we will make use of the Fourier transform.
We begin with some notation; wherever possible, we will maintain that of Bourgain.

Throughout, we shall regard our polynomial
\[ P(n) := b_d n^d + \dots b_1 n + b_0 \]
as fixed, and $1 \gg \delta >0$ will be a small but fixed constant.

We recall the exhaustion of the rationals inside $\TT$
\[ \bigcup_s \R_s := \bigcup_s \left\{ \frac{a}{q} : (a,q)=1, q \approx 2^s \right\},\]
where we use $X \approx Y$ to mean $Y \leq X \leq 2Y$.

For $t \approx 2^n$, we define the \emph{$n$-major arc}
\[ \aligned
\MM_n 
&:= \bigcup_{s \leq n\delta} \bigcup_{a/q \in \R_s } \MM_n (a/q) \\
&:= \bigcup_{a/q \in \R_s, s \leq n \delta} \left\{ \alpha : |\{ b_d \alpha \} - a/q | < 2^{-n(d-\delta)} \right\}, \endaligned \]
where we use $\{ x\} := x - \lfloor x \rfloor$ to denote the fractional part.
We continue to identify $0 \equiv 1$, so $\R_0 := \{ \frac{0}{1} \}$ and
\[ \MM_n(0/1) := \{ \alpha : \| \{ b_d \alpha \} \|_{\TT} < 2^{-n (d-\delta) } \},\]
where we use $\| x \|_{\TT}$ to denote the distance to the nearest integer.

For $0 \leq i < b_d$, \footnote{For technical ease, on first reading we recommend assuming that $b_d=1$, i.e.\ that $P$ is a monic polynomial.}
we further define
\[ \MM_n^i(a/q) := \left\{ \alpha \in \left[\frac{i}{b_d},\frac{i+1}{b_d} \right) \right\} \cap \MM_n(a/q) \]
to be intersection of each major arc with each of the $b_d$ distinct ``pre-intervals''
\[ \left\{ |\beta - a/q| \leq 2^{-n(d-\delta)} \right\}\]
under the map $\alpha \mapsto b_d \alpha$.
(Throughout, we assume that $n \gg_P 1$ is sufficiently large that there are $b_d$ distinct such pre-intervals.)

We simply define the minor arcs
\[ \mm_n := \TT \smallsetminus \MM_n.\]

With
\[ s, s' \leq n\delta, \ a/q \in \R_s, \ b/r \in \R_{s'},\]
 we remark that any two pre-intervals corresponding to $a/q$, $b/r$ are distinct. For otherwise we would have $\alpha$ such that
\[ | \{ b_d \alpha \} - a/q | + | \{ b_d \alpha \} - b/r | \lesssim 2^{-n (d- \delta)},\]
while
\[ | a/q - b/r| \geq 1/qr \gtrsim 2^{-2n\delta},\]
for the desired contradiction, since $\delta$ is sufficiently small.


Now, for each $a/q \in \R_s, \ s \leq n\delta$, we define
\[ q_i = q_i(a/q, P,i), \ 0 \leq i < b_d \]
to be the least common denominator of
\[ a/q, \ b_{d-1}/ b_d \cdot (a/q + i) , \ \dots, \ b_1/b_d \cdot (a/q + i) \]
i.e.\ $q_i$ is as small as possible such that there exist integers
\[ a^i_d= a^i_d(a/q, P,i), \ a^i_{d-1}= a^i_{d-1}(a/q, P,i), \ \dots, \ a^i_1= a^i_1(a/q, P,i)\]
satisfying
\[ \aligned
\big( a/q, b_{d-1}/ b_d \cdot (a/q +i) , \dots, b_1/b_d \cdot (a/q+i) \big) &= \big(a^i_d/q_i, a^i_{d-1}/q_i , \dots, a^i_1/q_i \big) \text{ and } \\
(a^i_d,a^i_{d-1},\dots,a^i_1, q_i) &= 1. \endaligned\]
We remark that for each $i$, $q | q_i$, and that $q_i \leq b_d q \lesssim_P q$.

With the above notation in mind, we define the weight
\[ S_P^i (a/q) := \frac{1}{q_i} \sum_{r=1}^{q_i} e \left( \frac{-a^i_d r^d - \dots - a^i_1 r}{q_i} \right).\]
By \cite[\S 7, Theorem 10.1]{HUA} for any $\epsilon > 0$ we have the estimate
\[ |S_P^i(a/q)| \lesssim_\epsilon (q_i)^{\epsilon - 1/d} \lesssim q^{\epsilon - 1/d};\]
all we need is that there exist small $\nu > 0$ (determined below) so that
\[ |S_P^i(a/q) | \lesssim q^{-\nu}.\]
We also define the oscillatory ``pseudo-projections''
\[ v_t(\beta) := \int_0^1 e( -\beta t^d s^d) \ ds;\]
for $t \approx 2^n$ we have
\[ \aligned
|v_t(\beta) - 1| &\lesssim 2^{nd} |\beta|, \\
|v_t(\beta)| &\lesssim \frac{1}{2^n|\beta|^{1/d}}, \endaligned\]
by the mean-value theorem and van der Corput's lemma on oscillatory integrals.

For $t \approx 2^n$, we define the multipliers
\[ \widehat{C_t}(\alpha) = \widehat{K_t}(\alpha) - \widehat{K_{2^n}}(\alpha).\]
The $L^2$-smoothness of the family of multipliers
\[ t \mapsto \widehat{K_t}(\alpha) \]
is captured by the following proposition, whose proof we defer to \S 6 below.

\begin{proposition}\label{est}
Suppose $\frac{a}{q} \in \R_s, s \leq n\delta$, and that $t \approx 2^n$. Then there exists $\nu >0$ so that
\begin{enumerate}
\item For any $\alpha \in \TT$, $|\widehat{C_t - C_{t+1}}(\alpha)| \equiv |\widehat{K_t - K_{t+1}}(\alpha)| \lesssim 2^{-n}$;
\item For $\alpha \in \mm_n$ a minor arc, $|\widehat{C_t}(\alpha)| \lesssim 2^{-n \nu}$; and
\item For $\alpha \in \MM_{n}(\frac{a}{q})$ a major arc
\[ |\widehat{C_t}(\alpha)| \lesssim 2^{-\nu s} \left( \min \left\{ 2^n|\{ b_d\alpha\}- a/q|^{1/d}, \frac{1}{2^n|\{ b_d\alpha\} - a/q|^{1/d}} \right\} + 2^{-n/2} \right).\]
\end{enumerate}
\end{proposition}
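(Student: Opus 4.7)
\textbf{Plan of proof of Proposition \ref{est}.}

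\textbf{Part (1):} This is elementary. Writing
\[
\widehat{K_{t+1}}(\alpha)-\widehat{K_t}(\alpha) = \frac{1}{t+1}\,e(-\alpha P(t+1)) + \Bigl(\tfrac{1}{t+1}-\tfrac{1}{t}\Bigr)\sum_{n\leq t}e(-\alpha P(n)),
\]
the first term is $O(1/t)$ and the second is $O(1/t^{2})\cdot O(t)=O(1/t)$, so the whole expression is $O(2^{-n})$. Since the analogous bound passes to $\widehat{C_t-C_{t+1}}=\widehat{K_t-K_{t+1}}$ trivially, part (1) is done.

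\textbf{Part (2):} This is a Weyl-sum bound on the minor arcs. If $\alpha\in\mm_n$, then by Dirichlet's approximation theorem there exist coprime $a,q$ with $q\leq 2^{n(d-\delta)}$ and $|b_d\alpha-a/q|\leq q^{-1}2^{-n(d-\delta)}$; the minor-arc hypothesis precludes $q\leq 2^{n\delta}$, so $q\in(2^{n\delta},2^{n(d-\delta)}]$. Classical Weyl's inequality for the polynomial $P$ (e.g.\ \cite[\S 7, Theorem 10.1]{HUA}) then yields
\[
\Bigl|\sum_{n\leq t}e(-\alpha P(n))\Bigr|\lesssim_{P,\epsilon} t^{\,1-2^{1-d}+\epsilon},
\]
and the same holds for the truncation at $2^n$. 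Dividing by $t\approx 2^n$ and taking the triangle inequality gives $|\widehat{C_t}(\alpha)|\lesssim 2^{-n\nu}$ for any $\nu<2^{1-d}$.

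\textbf{Part (3):} This is the standard major-arc asymptotic plus a comparison estimate on the continuous factor. Fix $\alpha\in\MM_n^i(a/q)$, and write $\beta:=\{b_d\alpha\}-a/q$, so $|\beta|<2^{-n(d-\delta)}$ and $\alpha=i/b_d+a/(qb_d)+\beta/b_d$. A direct computation using the definitions of $q_i$ and $a_j^i$ shows that modulo $1$,
\[
\alpha P(n)\equiv \frac{a^i_d n^d+\cdots+a^i_1 n}{q_i}+\frac{\beta\,P(n)}{b_d}+\alpha b_0.
\]
Decomposing $n=lq_i+r$ with $1\leq r\leq q_i$, the arithmetic phase is periodic in $r$, so
\[
\sum_{n\leq t}e(-\alpha P(n))=e(-\alpha b_0)\sum_{r=1}^{q_i}e\!\Bigl(-\tfrac{a^i_d r^d+\cdots+a^i_1 r}{q_i}\Bigr)\sum_{l}e\!\Bigl(-\tfrac{\beta P(lq_i+r)}{b_d}\Bigr).
\]
Approximate the inner $l$-sum by $q_i^{-1}\int_0^t e(-\beta u^d)\,du = (t/q_i)v_t(\beta)$; since $|\beta|\leq 2^{-n(d-\delta)}$, the error introduced both by Euler--Maclaurin summation and by replacing $P(u)/b_d$ with $u^d$ is $O(2^{n/2})$ (this is where the careful book-keeping of \cite[\S 5]{B1} is used). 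Normalizing by $t\approx 2^n$ gives
\[
\widehat{K_t}(\alpha)=e(-\alpha b_0)\,S_P^i(a/q)\,v_t(\beta)+O(2^{-n/2}),
\]
and the same with $t$ replaced by $2^n$. Subtracting,
\[
\widehat{C_t}(\alpha)=e(-\alpha b_0)\,S_P^i(a/q)\,\bigl[v_t(\beta)-v_{2^n}(\beta)\bigr]+O(2^{-n/2}).
\]
Finally, $|S_P^i(a/q)|\lesssim q^{-\nu}\lesssim 2^{-s\nu}$ by Hua, and $|v_t(\beta)-v_{2^n}(\beta)|\lesssim \min\!\bigl(2^n|\beta|^{1/d},\,(2^n|\beta|^{1/d})^{-1}\bigr)$: for $|\beta|\leq 2^{-nd}$ the mean-value estimate $|v_t-v_{2^n}|\lesssim 2^{nd}|\beta|$ rewrites as $\lesssim 2^n|\beta|^{1/d}$, while for $|\beta|>2^{-nd}$ van der Corput gives $|v_t|,|v_{2^n}|\lesssim (2^n|\beta|^{1/d})^{-1}$ and we use the triangle inequality. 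Combining the two estimates yields (3).

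\textbf{Main obstacle.} Parts (1) and (2) are essentially off-the-shelf. The delicate step is the major-arc asymptotic in (3): one must justify replacing the inner discrete $l$-sum by the continuous oscillatory integral with an error no worse than $2^{-n/2}$, uniformly over $a/q\in\R_s$ with $s\leq n\delta$, and then carry the bound all the way through the difference $\widehat{K_t}-\widehat{K_{2^n}}$ so that the $v_t-v_{2^n}$ cancellation is visible. The arithmetic reduction via the congruence classes mod $q_i$ and the identification of the phase with the Gauss-type sum $S_P^i(a/q)$ is the heart of the argument and needs to be set up carefully so that the Hua factor $2^{-s\nu}$ multiplies the entire main term.
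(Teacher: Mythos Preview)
Your approach is the same as the paper's: Part (1) by direct computation, Part (2) by Dirichlet plus Weyl, Part (3) by splitting $n = lq_i + r$ to separate the Gauss-type factor $S_P^i(a/q)$ from the oscillatory integral $v_t(\beta)$, then subtracting and invoking the mean-value/van der Corput dichotomy for $v_t - v_{2^n}$.

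Two bookkeeping corrections are needed. In Part (2), the Weyl bound you wrote, $|\sum_{n\leq t}e(-\alpha P(n))|\lesssim t^{1-2^{1-d}+\epsilon}$, is too strong and drops the $q$-dependence; the usable form is
\[
|\widehat{K_t}(\alpha)| \lesssim_\epsilon t^\epsilon\bigl(1/q + 1/t + q/t^d\bigr)^{2^{1-d}},
\]
(this is Vaughan's Lemma 2.1, not Hua's Gauss-sum bound), and it is the minor-arc constraint $2^{n\delta}\lesssim q\leq 2^{n(d-\delta)}$ that makes each bracketed term $\lesssim 2^{-n\delta}$ and yields the $2^{-n\nu}$ saving.

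More substantively, in Part (3) your claimed error $O(2^{-n/2})$ in the approximation $\widehat{K_t}(\alpha)=S_P^i(a/q)v_t(\beta)+\text{error}$ is not tight enough to recover the statement as written. The proposition asserts
\[
|\widehat{C_t}(\alpha)| \lesssim 2^{-\nu s}\Bigl(\min\{\cdot\} + 2^{-n/2}\Bigr),
\]
with $2^{-\nu s}$ multiplying the \emph{entire} bracket, so the error must be $\lesssim 2^{-\nu s}\cdot 2^{-n/2}$, not merely $2^{-n/2}$; since $s$ ranges up to $n\delta$ this distinction matters. The paper tracks the error more carefully as $O_P(2^{n(2\delta-1)})$ (each phase replacement costs $|\beta|\cdot q_i\cdot t^{d-1}\lesssim 2^{-n(d-\delta)}2^{n\delta}2^{n(d-1)}$), and then uses $s\leq n\delta$ to absorb it via $2^{n(2\delta-1)}\leq 2^{-n\delta\nu}\cdot 2^{-n/2}\leq 2^{-s\nu}\cdot 2^{-n/2}$ once $\delta$ is small enough. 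Sharpen your error term to $O(2^{n(2\delta-1)})$ and the final line goes through.
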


We also include the following elementary lemma. This result is essentially due to Bourgain (cf. \cite[Lemma 3.11]{B1}, and \cite[Proposition 2.12]{J} as well), and we will make repeated use of it.
\begin{lemma}\label{smooth}
Suppose that $\{B_n\}_{n=1}^N$ are a family of operators which act on (finitely supported) $l^2$ functions by multiplication on the fourier side
\[ \widehat{B_nf}(\beta) := m_n(\beta) \hat{f}(\beta).\]
Suppose that for each $1 \leq n \leq N$
\[ \aligned
&\sup_{\beta \in \supp \hat{f}} |m_n(\beta)| \leq A \text{ and } \\
&\sup_{\beta \in \supp \hat{f}} |m_n(\beta) - m_{n+1}(\beta)| \leq a. \endaligned \]
Then $\| \V^2(B_nf) \|_{l^2} \lesssim \sqrt{NAa} \|f\|_{l^2}$.
\end{lemma}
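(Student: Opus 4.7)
The plan is to pass to the Fourier side and prove a pointwise (in frequency) estimate. Since each $B_n$ is a Fourier multiplier with symbol $m_n$, Plancherel gives
\[ \|\V^2(B_n f)\|_{l^2}^2 = \int_{\TT} \V^2(m_n(\beta))^2 \, |\hat{f}(\beta)|^2 \, d\beta, \]
where $\V^2(m_n(\beta))$ denotes the $2$-variation of the scalar sequence $\{m_n(\beta)\}_{n=1}^N$, and where we may restrict the integration to $\supp \hat f$. It therefore suffices to establish the uniform scalar bound
\[ \V^2(m_n(\beta)) \lesssim \sqrt{NAa} \quad \text{for all } \beta \in \supp \hat f, \]
which yields the claim upon integrating against $|\hat f(\beta)|^2$.

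Fix such a $\beta$ and an increasing sequence $1 \leq i_1 < i_2 < \cdots < i_K \leq N$; write $l_k := i_{k+1} - i_k$, so that $\sum_k l_k \leq N$. I would estimate each jump in two ways. The \emph{coarse} bound uses only the size hypothesis, $|m_{i_{k+1}}(\beta) - m_{i_k}(\beta)| \leq 2A$. The \emph{fine} bound telescopes across consecutive indices and invokes the smoothness hypothesis:
\[ |m_{i_{k+1}}(\beta) - m_{i_k}(\beta)| \leq \sum_{j=i_k}^{i_{k+1}-1}|m_{j+1}(\beta) - m_j(\beta)| \leq l_k a. \]
Combining, $|m_{i_{k+1}}(\beta) - m_{i_k}(\beta)|^2 \leq \min\{4A^2,\, l_k^2 a^2\}$.

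To conclude I would balance the two estimates at the threshold $l_k \approx A/a$. For indices with $l_k \leq A/a$ the fine bound dominates, and the total contribution is at most $a^2 \sum_k l_k^2 \leq a^2 \cdot (A/a) \cdot \sum_k l_k \leq NAa$. For indices with $l_k > A/a$, there are at most $N/(A/a) = Na/A$ such values of $k$, and the coarse bound contributes at most $4A^2 \cdot Na/A = 4NAa$. Summing the two regimes yields $\V^2(m_n(\beta))^2 \lesssim NAa$ uniformly in $\beta$, which combined with the Plancherel identity above completes the proof. The only subtlety is this elementary interpolation between the coarse and fine bounds; neither bound alone is sharp enough to recover $\sqrt{NAa}$, but together they do so after the obvious splitting.
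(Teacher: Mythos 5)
The scalar estimate in your second and third paragraphs is correct, and it captures exactly the right quantitative mechanism (your threshold $l_k \approx A/a$ is precisely the block length the paper's proof uses). The problem is the very first step: the claimed identity
\[ \| \V^2(B_nf)\|_{l^2}^2 \;=\; \int_{\TT} \V^2\bigl(m_n(\beta)\bigr)^2\, |\hat f(\beta)|^2 \, d\beta \]
is false, and even the inequality $\leq$ is unjustified. The $2$-variation is a supremum over index sequences $(i_k)$ that depend on the spatial variable $x$; once you linearize by choosing the optimal sequence $(i_k(x))$ at each point $x$, the resulting operator is no longer a Fourier multiplier, so Plancherel does not apply to it. What Plancherel actually gives is
\[ \sum_k \|(B_{i_k}-B_{i_{k+1}})f\|_{l^2}^2 \;\leq\; \sup_{\beta \in \supp \hat f} \V^2\bigl(m_n(\beta)\bigr)^2 \, \|f\|_{l^2}^2 \]
for each \emph{fixed} sequence $(i_k)$, and one cannot move the $x$-dependent supremum inside the frequency integral without further argument (crude fixes, like summing over all $2^N$ sequences, lose exponential factors). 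This is precisely the difficulty that makes variational estimates genuinely harder than square-function estimates, and it is the entire content of the lemma.

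The paper circumvents this by keeping every index set $x$-independent: it fixes $L$ (almost) equally spaced indices $a_1 < \dots < a_L$, dominates $\V^2(B_nf)$ pointwise by the square function $\bigl(\sum_{i}|B_{a_i}f|^2\bigr)^{1/2}$ plus the variations within each block, bounds each block variation by the $\ell^1$ sum of consecutive differences, and applies Cauchy--Schwarz to convert that to $(N/L)^{1/2}\bigl(\sum_n |B_nf-B_{n+1}f|^2\bigr)^{1/2}$. Both resulting square functions run over fixed index sets, so Plancherel applies term by term, giving $\sqrt{L}\,A + Na/\sqrt{L}$; the choice $L \approx Na/A$ yields $\sqrt{NAa}$. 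Your scalar computation is fine as motivation, but to prove the operator statement you need this (or an equivalent) fixed-grid decomposition before invoking Plancherel.
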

\begin{proof}
With $L$ a positive integer to be determined, and $1= a_1 < a_2 < \dots a_L = N$ (almost) equally spaced indices (so $a_{i+1} - a_{i} \approx \frac{N}{L}$), we may pointwise dominate
\[ \aligned
\V^2( B_nf) &\leq \left(\sum_{i=1}^L |B_{a_i}f|^2 \right)^{1/2} +
\left( \sum_{i=1}^L \left( \V^2( B_nf : a_i \leq n \leq a_{i+1}) \right)^2 \right)^{1/2} \\
&\leq \left(\sum_{i=1}^L |B_{a_i}f|^2 \right)^{1/2} +
\left( \sum_{i=1}^L \left( \sum_{n=a_i}^{a_{i+1}} |B_nf - B_{n+1}f| \right)^2 \right)^{1/2} \\
&\lesssim \left(\sum_{i=1}^L |B_{a_i}f|^2 \right)^{1/2} + \left(\frac{N}{L} \right)^{1/2} \left( \sum_{n=1}^N |B_nf - B_{n+1}f|^2 \right)^{1/2}, \endaligned\]
where we used Cauchy-Schwartz in the last inequality.

We take $l^2$-norms, and use Plancherel's theorem to majorize the first summand
\[ \aligned
\left\| \left(\sum_{i=1}^L |B_{a_i}f|^2 \right)^{1/2} \right\|_{l^2} &= \left( \sum_{i=1}^L \left\| m_{a_i} (\beta) \hat{f}(\beta) \right\|_{L^2}^2 \right)^{1/2} \\
&\leq \left( \sum_{i=1}^L A^2 \left\| \hat{f} \right\|_{L^2}^2 \right)^{1/2} \\
&\leq \sqrt{L} A \|f\|_{l^2} \endaligned \]
and the second summand
\[ \aligned
\left\| \left(\frac{N}{L} \right)^{1/2} \left( \sum_{n=1}^N |B_nf - B_{n+1}f|^2 \right)^{1/2} \right\|_{l^2} &=
\left(\frac{N}{L} \right)^{1/2} \left( \sum_{n=1}^N \left\|(m_n - m_{n+1})(\beta) \hat{f}(\beta) \right\|_{L^2}^2 \right)^{1/2} \\
&\leq \left(\frac{N}{L} \right)^{1/2}  \sqrt{N}a \|f\|_{l^2} \\
&= \frac{Na}{ \sqrt{L}} \|f\|_{l^2}. \endaligned\]
Setting $L \approx \frac{Na}{A}$ yields the result.
\end{proof}

\section{The Proof of Proposition \ref{main2}}
By our long variation result, it suffices to prove
\[ \left( \sum_n \| \V^2_n (K_t*f) \|_2^2 \right)^{1/2} \lesssim \|f\|_2,\]
where we abbreviate
\[\V^2_n (K_t*f) := \V^2 (K_t*f : t \approx 2^n).\]

We split $f = f_{\mm_n} + f_{\MM_n}$ as a projection onto $n$-minor and $n$-major arcs, and majorize
\[ \left( \sum_n \| \V^2_n (K_t*f) \|_2^2 \right)^{1/2} \leq \left( \sum_n \| \V^2_n (K_t*f_{\mm_n}) \|_2^2 \right)^{1/2} + \left( \sum_n \| \V^2_n (K_t*f_{\MM_n}) \|_2^2 \right)^{1/2}.\]

We use Proposition \ref{est} and Lemma \ref{smooth} to control the first summand. Specifically, in the notation of Lemma \ref{smooth} we may take
\[ N = 2^n, A = 2^{-n\nu}, \text{ and } a = 2^{-n},\]
so that we may estimate
\[
\left( \sum_n 2^{-n \nu} \| f_{\mm_n} \|_2^2 \right)^{1/2} \leq
\left( \sum_n 2^{-n \nu} \| f \|_2^2 \right)^{1/2} \leq \| f\|_2.\]

We therefore restrict our attention to the second term in the above summand; we will prove the following
\begin{proposition}\label{mainprop}
In the above notation,
\[ \sum_n \| \V^2_n (K_t*f_{\MM_n}) \|_2^2  \lesssim \|f\|_2^2.\]
\end{proposition}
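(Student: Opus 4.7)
The plan is to split the multiplier $\widehat{C_t}$ (where $C_t := K_t - K_{2^n}$, so $\V^2_n(C_t \ast f) = \V^2_n(K_t \ast f)$ for $t \approx 2^n$) into a main-term and a uniformly small error, handling each via Lemma \ref{smooth} combined with a dyadic frequency decomposition adapted to the $\min$-factor in Proposition \ref{est}(3). The key refinement is to decompose $f$ orthogonally in Fourier into pieces at scale $(s,j)$, where $j$ measures the dyadic distance to the nearest rational in $\R_s$.

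First I would extract from the proof of Proposition \ref{est} (carried out in \S 6) a multiplier-level splitting $C_t = C_t^{\mathrm{main}} + E_t$, with $|\widehat{E_t}(\alpha)| \lesssim 2^{-n\nu}$ uniformly and $|\widehat{E_t - E_{t+1}}| \lesssim 2^{-n}$. Lemma \ref{smooth} (applied with $N = 2^n$, $A = 2^{-n\nu}$, $a = 2^{-n}$) yields $\|\V^2_n(E_t \ast f_{\MM_n})\|_2 \lesssim 2^{-n\nu/2} \|f\|_2$, and the $\ell^2_n$-summability of $2^{-n\nu}$ handles the error contribution.

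For the main term, fix $s \leq n\delta$ and decompose $\MM_n^s$ into dyadic frequency annuli
\[\mathcal{A}_{s,j} := \{\alpha : \exists\, a/q \in \R_s, \ |\{b_d \alpha\} - a/q| \approx 2^{-j}\},\]
indexed by $j > n(d-\delta)$ (and, crucially, $n$-independent within this range). On $\mathcal{A}_{s,j}$, the proof of Proposition \ref{est} yields the sharpened bound $|\widehat{C_t^{\mathrm{main}}}(\alpha)| \lesssim 2^{-\nu s - |nd-j|/d}$, reflecting the pointwise estimates on $|v_t - v_{2^n}|$. Lemma \ref{smooth} then gives $\|\V^2_n(C_t^{\mathrm{main}} \ast f_{\mathcal{A}_{s,j}})\|_2 \lesssim 2^{-\nu s/2 - |nd-j|/(2d)} \|f_{\mathcal{A}_{s,j}}\|_2$. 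Summing in $j$ by Minkowski and a weighted Cauchy--Schwarz with weights $c_j := 2^{-|nd-j|/(2d)}$ (for which $\sum_j c_j = O(1)$) yields
\[\|\V^2_n(C_t^{\mathrm{main}} \ast f_{\MM_n^s})\|_2^2 \lesssim 2^{-\nu s} \sum_j 2^{-|nd-j|/(2d)} \|f_{\mathcal{A}_{s,j}}\|_2^2.\]
Exchanging the $n$- and $j$-sums via Fubini, noting that $\sum_n 2^{-|nd-j|/(2d)} = O(1)$ for each $j$ and using the Fourier orthogonality $\sum_j \|f_{\mathcal{A}_{s,j}}\|_2^2 \leq \|f\|_2^2$, gives $\sum_n \|\V^2_n(C_t^{\mathrm{main}} \ast f_{\MM_n^s})\|_2^2 \lesssim 2^{-\nu s} \|f\|_2^2$. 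A final Minkowski in $s$ on the $\ell^2_n(L^2)$-norm, using $\sum_s 2^{-\nu s/2} \lesssim 1$, completes the main-term bound.

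The hard part will be extracting from the proof of Proposition \ref{est} the explicit operator decomposition $C_t = C_t^{\mathrm{main}} + E_t$ (not merely the pointwise bound on $|\widehat{C_t}|$ stated in its conclusion), together with the frequency-localized sharpened estimate on $C_t^{\mathrm{main}}$ on each annulus $\mathcal{A}_{s,j}$. This refinement is precisely what permits the separate, compatibly-scaled summation of the main and error pieces; a direct appeal to the combined bound in Proposition \ref{est}(3) is too lossy, as it leads to an $n$-divergent sum when one tries to control $\sum_n \|f_{\MM_n^s}\|_2^2$.
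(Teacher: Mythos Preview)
Your proposal is correct and follows essentially the same architecture as the paper's proof: decompose the major arcs into dyadic annuli $\RRR_{s,j}$ (your $\mathcal{A}_{s,j}$) indexed by the level $s$ and the distance $j$ to the nearest rational, apply Lemma~\ref{smooth} on each piece, then Cauchy--Schwarz in $(s,j)$ and Fubini in $(n,j)$ to close the sum.

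The one point of divergence is your handling of the additive $2^{-n/2}$ error in Proposition~\ref{est}(3). You propose to go into the proof of that proposition and extract an explicit operator splitting $C_t = C_t^{\mathrm{main}} + E_t$, treating the error separately. This works (and your claimed bound $|\widehat{E_t - E_{t+1}}| \lesssim 2^{-n}$ does hold, most cleanly via the representation $v_t(\beta) = t^{-1}\int_0^t e(-\beta u^d)\,du$). However, your final paragraph overstates the difficulty: the paper uses Proposition~\ref{est}(3) \emph{as stated}, without re-entering its proof. It absorbs the $2^{-n/2}$ term by introducing a threshold $l_n = nd/2$, noting that for $|l| \le l_n$ the $\min$-term dominates while for $l > l_n$ the $2^{-n/2}$ term does, and then collecting the tail $\bigcup_{l > l_n} \RRR_{s,nd+l}$ into a single piece $\RRR_{s,n^*}$ carrying the bound $2^{-\nu s}2^{-n/2}$. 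The resulting contribution $2^{-n/4}\|f_{\RRR_{s,n^*}}\|_2^2$ is directly summable in $n$. So a direct appeal to the combined bound is not in fact too lossy once one decomposes in $l$---your operator-level splitting and the paper's case-split on $l$ versus $l_n$ are two equivalent bookkeeping devices for the same estimate.
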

We make a few remarks before we turn to the proof proper:

For $t \approx 2^n$, \[ \V^2_n (K_t*f_{\MM_n}) \equiv \V^2_n (C_t*f_{\MM_n}),\]
since we are summing over \emph{differences} of operators.

Since, roughly speaking, for $t \approx 2^n$, on $\MM_n^i(\frac{a}{q})$ we have
\[ \aligned
|\widehat{C_t}(\alpha)| ``&=" |S_P^i(\frac{a}{q})| \left|v_t(\{b_d\alpha\} - \frac{a}{q}) - v_{2^n}(\{b_d\alpha\}-\frac{a}{q}) \right|
\\
&\lesssim q^{-\nu} \min\left\{2^{dn}|\{b_d\alpha\} -\frac{a}{q}|,\frac{1}{2^{n}|\{b_d\alpha\} -\frac{a}{q}|^{1/d}} \right\}, \endaligned\]
it makes sense to partition the major arcs according to both the size of the denominators of, and the distance to, our rationals $a/q$.
We therefore further decompose our major arcs:

For $k \gg 2s$, we introduce
\[ \MM_n = \bigcup_{s \leq n \delta} \bigcup_{l \geq -n\delta} \RRR_{s,nd + l},\]
where
\[ \RRR_{s,k} := \bigcup_{a/q \in \R_s} \left\{ \alpha : | \{ b_d \alpha \} - a/q | \approx 2^{-k} \right\}.\]

\begin{proof}
On each $\RRR_{s,nd +l}$ we bound
\[ |\widehat{C_t}(\alpha)| \lesssim 2^{-\nu s} \left( 2^{-|l|/d} + 2^{-n/2} \right).\]

If we define the critical $l_n := \frac{nd}{2}$ to be the unique distance where $2^{-|l|/d} = 2^{-n/2}$, we have
\[
|\widehat{C_t}(\alpha)| \lesssim
\begin{cases} 2^{-\nu s}  2^{-|l|/d} &\mbox{if } l \leq l_n \\
2^{-\nu s}  2^{-l_n/d} \equiv
2^{-\nu s}  2^{-n/2} & \mbox{if } l > l_n \end{cases}.\]

We now collect $\bigcup_{l> l_n} \RRR_{s,nd+l} =: \RRR_{s,n^*}$, and use Lemma \ref{smooth} to majorize
\[ \aligned
&\| \V_n^2(C_t*f_{\MM_n}) \|_2^2 \\
& \qquad \leq \left( \sum_{s \leq n \delta} \sum_{l= - n \nu}^{l_n} \| \V_n^2(C_t*f_{\RRR_{s,nd+l}}) \|_2 + \| \V_n^2(C_t*f_{\RRR_{s,n^*}}) \|_2
\right)^2 \\
& \qquad \lesssim
\left( \sum_{s \leq n \delta} \sum_{l= - n \nu}^{l_n} 2^{-s\nu/2} 2^{-|l|/2d} \| f_{\RRR_{s,nd+l}} \|_2 + 2^{-s\nu/2}2^{-l_n/2d} \| f_{\RRR_{s,n^*}} \|_2 \right)^2 \\
& \qquad \lesssim
\sum_{s \leq n \delta} 2^{-s\nu/2} \sum_{l= - n \nu}^{l_n} 2^{-|l|/2d} \| f_{\RRR_{s,nd+l}} \|_2^2 + 2^{-l_n/2d} \| f_{\RRR_{s,n^*}} \|_2^2, \endaligned\]
where $f_{\RRR_{s,k}}$ denotes the fourier projection onto $\RRR_{s,k}$, etc.\ and we used Cauchy-Schwarz in $\{ s \leq n \nu, l \leq l_n \}$ in the final inequality.

Summing the foregoing over $n$, and interchanging the $(n,s)$-order of summation yields the upper estimate
\[
\sum_n \| \V^2_n (K_t*f_{\MM_n}) \|_2^2 \leq
\sum_{s } 2^{-s\nu/2} \left( \sum_{n= \frac{s}{\delta}} \sum_{l= - n \nu}^{l_n} 2^{-|l|/2d} \| f_{\RRR_{s,nd+l}} \|_2^2 + 2^{- n/4} \| f_{\RRR_{s,n^*}} \|_2^2 \right);\]
we will show that each bracketed term is $\lesssim \|f\|_2^2$.

To do so, with $s$ fixed, we expand the bracketed expression, make the change of variables $k = nd +l$, and interchange $(n,k)$ order of summation to obtain
\[ \aligned
& \sum_{n= \frac{s}{\delta}} \sum_{l= - n \nu}^{l_n} 2^{-|l|/2d} \| f_{\RRR_{s,nd+l}} \|_2^2 + 2^{- n/4} \| f_{\RRR_{s,n^*}} \|_2^2 \\
& \qquad =
\sum_{n= \frac{s}{\delta}} \sum_{l= - n \nu}^{l_n} 2^{-|l|/2d} \| f_{\RRR_{s,nd+l}} \|_2^2 + 2^{- n/4} \sum_{l>l_n} \| f_{\RRR_{s,nd + l}} \|_2^2 \\
& \qquad =
\sum_{n= \frac{s}{\delta}} \sum_{k = nd - n \nu}^{\frac{3nd}{2}  } 2^{-|k-nd|/2d} \|f_{\RRR_{s,k}}\|_2^2 + 2^{- n/4} \sum_{k>\frac{3nd}{2}} \| f_{\RRR_{s,k}} \|_2^2 \\
& \qquad =
\sum_{k = \frac{s}{\delta} (d - \nu)} \sum_{ \max\{ \frac{s}{\delta}, \frac{k}{3d/2} \} }^{\frac{k}{d-\nu}} 2^{-|k-nd|/2d} \|f_{\RRR_{s,k}}\|_2^2 +
\sum_{k = \frac{s}{\delta} 3d/2 } \sum_{ \frac{s}{\delta}}^{\frac{k}{3d/2}} 2^{-n/4} \|f_{\RRR_{s,k}}\|_2^2 \\
&\qquad \lesssim
\sum_{k = \frac{s}{\delta} (d - \nu)} \|f_{\RRR_{s,k}}\|_2^2 + \sum_{k = \frac{s}{\delta} 3d/2 } \|f_{\RRR_{s,k}}\|_2^2 \\
& \qquad \lesssim \|f\|_2^2, \endaligned\]
as desired.
\end{proof}

\section{Proof of Proposition \ref{est}}
In this section we prove Proposition \ref{est}, and thereby conclude the argument.

\begin{proposition}
Suppose $\frac{a}{q} \in \R_s, s \leq n\delta$, and that $t \approx 2^n$. Then there exists $\nu >0$ so that
\begin{enumerate}
\item For any $\alpha \in \TT$, $|\widehat{C_t - C_{t+1}}(\alpha)| \equiv |\widehat{K_t - K_{t+1}}(\alpha)| \lesssim 2^{-n}$;
\item For $\alpha \in \mm_n$ a minor arc, $|\widehat{C_t}(\alpha)| \lesssim 2^{-n \nu}$; and
\item For $\alpha \in \MM^i_{n}(\frac{a}{q})$ each segment of a major arc
\[ |\widehat{C_t}(\alpha)| \lesssim 2^{-\nu s} \left( \min \left\{ 2^n|\{ b_d\alpha\}- a/q|^{1/d}, \frac{1}{2^n|\{ b_d\alpha\} - a/q|^{1/d}} \right\} + 2^{-n/2} \right).\]
\end{enumerate}
\end{proposition}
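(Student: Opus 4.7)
The plan is to establish the three claims by standard Hardy--Littlewood circle method techniques, adapted to accommodate the $b_d$-fold structure of the major arcs.

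\emph{Part (1)} is essentially trivial. Writing
\[ \widehat{K_{t+1}}(\alpha) - \widehat{K_t}(\alpha) = \frac{e(-\alpha P(t+1))}{t+1} + \left(\frac{1}{t+1} - \frac{1}{t}\right)\sum_{n\leq t} e(-\alpha P(n)), \]
each summand is $O(1/t) = O(2^{-n})$ uniformly in $\alpha$, and the cancellation with $\widehat{K_{2^n}}$ in $\widehat{C_t}$ is irrelevant for this bound.

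\emph{Part (2)} will follow from Weyl's inequality. Apply Dirichlet to $b_d\alpha$ with parameter $Q:=2^{n(d-\delta)}$ to find coprime $a,q$ with $1 \leq q \leq Q$ and $|b_d\alpha - a/q| \leq 1/(qQ) \leq 1/q^2$. If $q \leq 2^{n\delta}$, then $a/q \in \R_s$ for some $s \leq n\delta$ and $|\{b_d\alpha\} - a/q| \leq 1/Q = 2^{-n(d-\delta)}$, placing $\alpha$ on a major arc $\MM_n(a/q)$, contradicting the minor-arc hypothesis. Hence $q > 2^{n\delta}$, and Weyl's inequality for the leading coefficient yields
\[ |\widehat{K_t}(\alpha)| \lesssim_\epsilon t^\epsilon \bigl(q^{-1} + t^{-1} + qt^{-d}\bigr)^{2^{1-d}} \lesssim 2^{n\epsilon} \cdot 2^{-n\delta \cdot 2^{1-d}}, \]
since all three terms inside the parentheses are $\leq 2^{-n\delta}$ in this regime. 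Fixing $\epsilon$ sufficiently small gives $|\widehat{K_t}(\alpha)|, |\widehat{K_{2^n}}(\alpha)| \lesssim 2^{-n\nu}$ for some absolute $\nu > 0$, whence $|\widehat{C_t}(\alpha)| \lesssim 2^{-n\nu}$.

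\emph{Part (3)} is the main circle-method expansion. On $\MM^i_n(a/q)$ write $b_d\alpha = a/q + i + \beta$ with $|\beta| < 2^{-n(d-\delta)}$. Since $in^d \in \Z$, one computes modulo 1 that
\[ \alpha P(n) \equiv \frac{a_d^i n^d + \cdots + a_1^i n}{q_i} + \frac{\beta}{b_d} P(n) + \alpha b_0, \]
which is the point of the definitions of $q_i$ and the $a_j^i$. Decomposing $n = r + k q_i$ with $1 \leq r \leq q_i$ separates the rational phase from the smooth phase:
\[ \widehat{K_t}(\alpha) = \frac{e(-\alpha b_0)}{t} \sum_{r=1}^{q_i} e\!\left(\tfrac{-\sum_{j} a_j^i r^j}{q_i}\right) \sum_{k : r + kq_i \leq t} e\!\left(-\tfrac{\beta}{b_d} P(r+kq_i)\right). \]
A Riemann-sum approximation replaces the inner sum by $(t/q_i)\,v_t(\beta)$, with error $O(1)$ per residue class (the phase's derivative being controlled by $|\beta| q_i t^{d-1} \lesssim 2^{n(2\delta-1)}$). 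The total error from the $q_i$ residues, divided by $t$, is $O(q_i/t) \lesssim 2^{s-n} \lesssim 2^{-n/2}$ provided $\delta$ is chosen small. Subtracting the corresponding expansion for $\widehat{K_{2^n}}$ gives
\[ \widehat{C_t}(\alpha) = e(-\alpha b_0)\, S_P^i(a/q)\,\bigl(v_t(\beta) - v_{2^n}(\beta)\bigr) + O(2^{-n/2}). \]
Hua's bound supplies $|S_P^i(a/q)| \lesssim 2^{-\nu s}$. For the pseudo-projection difference: when $2^n |\beta|^{1/d} \leq 1$ the mean-value estimate gives $|v_t(\beta) - v_{2^n}(\beta)| \lesssim 2^{nd}|\beta| = (2^n|\beta|^{1/d})^d \leq 2^n|\beta|^{1/d}$; otherwise van der Corput gives $|v_t(\beta) - v_{2^n}(\beta)| \lesssim 1/(2^n|\beta|^{1/d})$. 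In either case the required $\min$-bound holds.

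The main obstacle will be the bookkeeping in Part (3): carefully tracking the integer contributions when $b_d > 1$ (via the pre-intervals $\MM^i_n$), verifying that the Riemann-sum error is genuinely $O(q_i/t)$ under the derivative control in our regime, and checking that $\delta$ may be chosen small enough --- depending only on $P$ --- for the $O(q_i/t)$ error to be absorbed into the $2^{-n/2}$ target. This is what fixes the admissible range of $\delta$ and ultimately shapes the asymmetric statement of the major-arc bound.
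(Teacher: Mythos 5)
Your proposal follows essentially the same route as the paper: part (1) by the triangle inequality, part (2) by Dirichlet plus Weyl on the minor arcs, and part (3) by the standard major-arc expansion into the complete rational sum $S_P^i(a/q)$ times the oscillatory integral $v_t(\beta)$, with the same treatment of the $b_d$ pre-intervals and the same mean-value/van der Corput dichotomy for the $\min$ bound. The one inaccuracy is your claim that the Riemann-sum error is ``$O(1)$ per residue class,'' hence $O(q_i/t)$ in total: the naive per-class error is really $O(|\beta| t^d) = O(2^{n\delta})$, giving a total of $O_P(2^{n(2\delta-1)})$ after normalization (as in the paper), which is still absorbed into $2^{-\nu s}2^{-n/2}$ for $\delta$ small, so the conclusion is unaffected.
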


\begin{proof}
The first point follows trivially from the triangle inequality, so we begin the proof proper with the minor arcs.

Here, whenever $|\{b_d\alpha\} - a/q | < 2^{-n(d-\delta)}$ we necessarily have $q \gtrsim 2^{n \delta}$ (the approximate inequality comes from the fact that we admit all $a/q \in \R_s, \ s \leq n\delta$ into the definition of our major arcs, rather than simply $a/q$ such that $q < 2^{n\delta}$). By Dirichlet's theorem, we may choose a reduced fraction
\[ x/y, \ (x,y) = 1, \ y \leq 2^{n(d-\delta)}\]
 so that
\[ |\{ b_d\alpha\} - x/y | \leq \frac{1}{y 2^{n(d-\delta)}} \leq \frac{1}{y^2}.\]
By Weyl's inequality \cite[Lemma 2.1]{VA}, we have
\[ \left| \frac{1}{t} \sum_{n=1}^t e(- b_d\alpha n^d - \dots - b_1 \alpha n) \right| = |\widehat{K_t(\alpha)}| \lesssim_\epsilon
t^\epsilon \left( 1/y+ 1/t+y/t^d \right)^{1/2^{d-1}},\]
which leads to the effective estimate $|\widehat{K_t}(\alpha)| \lesssim 2^{-n\nu}$ for some $\nu = \nu(\epsilon, \delta) > 0$. The triangle inequality yields the second point.

We next turn to the major arcs.

Suppose $\alpha \in \MM^i_n(a/q)$, so that we may express
\[ b_d \alpha = i +a/q + \beta, \ \text{ where } |\beta| < 2^{-n(d-\delta)}, \ q \lesssim 2^{n\delta}.\]
With $q_i$ as in \S 4 above, and $m \leq t \approx 2^n$, we express $m = p q_i + r$ and write
\[ \aligned
P(m) \alpha &= (b_d\alpha) (pq_i +r)^d + \frac{b_{d-1}}{b_d} (b_d \alpha)(pq_i+r)^{d-1} + \dots + \frac{b_{1}}{b_d} (b_d \alpha)(pq_i+r) \\
&= (i + a/q + \beta) (pq_i +r)^d + \frac{b_{d-1}}{b_d} (i + a/q + \beta) (pq_i+r)^{d-1} + \dots + \frac{b_{1}}{b_d} (i + a/q + \beta) (pq_i+r) \\
&\equiv \frac{a^i_{d}}{q_i}(pq_i + r)^d + \beta(pq_i)^d +  \frac{a^i_{d-1}}{q_i}(pq_i + r)^{d-1} + \dots + \frac{a^i_{1}}{q_i}(pq_i + r) + O_P(2^{n (2 \delta -1)}) \mod 1 \\
&\equiv \beta (pq_i)^d + \left( \frac{a_d^i r^d + \dots + a_1^i r}{q_i} \right) + O_P(2^{n (2 \delta -1)}) \mod 1, \endaligned\]
so that for $m = pq_i+r$,
\[ e(-P(m) \alpha) = e\left( - \frac{a^i_d r^d +  \dots + a^i_1 r}{q_i} \right) e( -\beta (pq_i)^d) + O_P(2^{n(2\delta - 1)}).\]
Now, with $t = p_t q_i + r_t$, we have
\[ \aligned
\frac{1}{t} \sum_{m \leq t} e(-P(m) \alpha) &= \frac{1}{t} \sum_{m \leq p_t q_i} e(-P(m) \alpha) + O_P(2^{n(\delta - 1)}) \\
&= \frac{q_i}{t} \sum_{p=0}^{p_t - 1} e(-\beta (pq_i)^d) \cdot \frac{1}{q_i} \sum_{r=1}^{q_i} e\left( - \frac{a_d^ir^d + \dots + a^i_1 r}{q_i} \right) + O_P(2^{n(2\delta - 1)}) \\
&= \frac{q_i}{t} \int_0^{p_t} e( - \beta (q_i)^d s^d ) \ ds \cdot S^i_P(a/q) + O_P(2^{n(2\delta - 1)}) \\
&= \frac{q_i}{t} \int_0^{q_i/t}e( - \beta (q_i)^d s^d ) \ ds \cdot S^i_P(a/q) + O_P(2^{n(2\delta - 1)}) \\
&= \int_0^1 e( - \beta t^d s^d) \ ds \cdot S^i_P(a/q) + O_P(2^{n(2\delta - 1)}) \\
& = v_t(\beta) S^i_P(a/q) + O_P(2^{n(2\delta - 1)}), \endaligned\]
where we used that for $|p-s| \leq 1$
\[ e(-\beta (q_i)^d \cdot p^d ) = e(-\beta (q_i)^d \cdot s^d ) + O_P(2^{n(2\delta -1)})\]
in passing to the third line.
The upshot is that on $\MM^i_N(a/q)$ we have
\[ \widehat{C_t}(\alpha) = S^i_P(a/q) \cdot \left( v_t( \{b_d \alpha \} - a/q ) - v_{2^n}( \{b_d \alpha \} - a/q ) \right) + O_P(2^{n(2\delta -1)}).\]
Taking into account our estimates on $S^i_P(a/q), \ v_t$ we have the upper bound
\[ \aligned
|\widehat{C_t}(\alpha)| &\lesssim 2^{-s \nu} \left( \min \left\{ 2^{nd}| \{b_d \alpha \} - a/q | , \frac{1}{2^n| \{b_d \alpha \} - a/q |^{1/d}} \right\} \right) + O_P(2^{n(2\delta -1)}) \\
&\lesssim 2^{-s \nu} \left( \min \left\{ 2^{n}| \{b_d \alpha \} - a/q |^{1/d} , \frac{1}{2^n| \{b_d \alpha \} - a/q |^{1/d}} \right\} + 2^{-n/2} \right),
 \endaligned\]
since $2^{s\nu} \leq 2^{n\delta \nu}$ for $s \leq n \delta$.
\end{proof}

\section{Interpolation}
In this section, we will interpolate our $L^2$-based $\{ \V^r \}_{r > 2}$ estimates against Bourgain's $L^p, \ p>1$-based $\V^\infty$ estimate to some partial variational estimates in other $L^p$ spaces, and thereby make further progress towards understanding Problem \ref{bigproblem}.

We begin with the following general mixed-norm (complex) interpolation lemma, whose proof follows the same lines as the classical Riesz-Thorin interpolation theorem.


\begin{lemma}\label{absint}
Suppose that $T$ is a linear operator, bounded
\[ \aligned
T &: l^{p_0}_x \to l^{p_0}_x ( l^{r_0}_k) \\
T &: l^{p_1}_x \to l^{p_1}_x ( l^{r_1}_k). \endaligned \]
If
$\frac{1}{p} = \frac{1-\theta}{p_0} + \frac{\theta}{p_1}$ and
$\frac{1}{r} = \frac{1-\theta}{r_0} + \frac{\theta}{r_1}$ are determined by convexity, $T$ is bounded
\[ T: l^{p}_x \to l^{p}_x ( l^{r}_k) \]
as well.
\end{lemma}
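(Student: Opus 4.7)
The plan is to run the classical Riesz--Thorin argument in its complex-interpolation form, adapted to handle the mixed norm on the target side. By linearity of $T$ and a density argument, it suffices to prove the claim for finitely supported simple functions; and by the duality $\bigl(l^{p}_x(l^{r}_k)\bigr)^* = l^{p'}_x(l^{r'}_k)$ (valid in the relevant range of exponents), this reduces further to establishing
\[ \Bigl| \sum_{x,k} (Tf)_k(x) \, g_k(x) \Bigr| \leq M_0^{1-\theta} M_1^\theta \, \|f\|_{l^p_x} \, \|g\|_{l^{p'}_x(l^{r'}_k)} \]
for simple $f, g$, where $M_0, M_1$ denote the operator norms of $T$ at the two endpoints $(p_0,r_0)$ and $(p_1,r_1)$.

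For this, I would construct analytic families $\{F_z\}$ and $\{G_z\}$ on the strip $0 \leq \Re z \leq 1$ with $F_\theta = f$ and $G_\theta = g$. Setting $\tfrac{1}{p(z)} := \tfrac{1-z}{p_0} + \tfrac{z}{p_1}$ and $\tfrac{1}{r(z)} := \tfrac{1-z}{r_0} + \tfrac{z}{r_1}$, the input side uses the familiar Riesz--Thorin choice $F_z(x) := |f(x)|^{p/p(z)} \sgn(f(x))$. The mixed norm on the output side requires a two-scale construction: writing $h(x) := \bigl(\sum_k |g_k(x)|^{r'}\bigr)^{1/r'}$ and $u_k(x) := g_k(x)/h(x)$ wherever $h(x)>0$, I set
\[ G_z(x,k) := |u_k(x)|^{r'/r'(z)} \, h(x)^{p'/p'(z)} \, \sgn(g_k(x)). \]
A direct computation, exploiting that $\sum_k |u_k(x)|^{r'} = 1$, yields the boundary identities $\|F_{it}\|_{l^{p_0}_x} = \|f\|_{l^p_x}^{p/p_0}$ and $\|G_{it}\|_{l^{p_0'}_x(l^{r_0'}_k)} = \|g\|_{l^{p'}_x(l^{r'}_k)}^{p'/p_0'}$, with analogous identities on the line $\Re z = 1$.

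The function $\Phi(z) := \sum_{x,k} (TF_z)_k(x) \, G_z(x,k)$ is then a finite sum of complex exponentials in $z$, so entire, bounded, and continuous on the closed strip, with $\Phi(\theta)$ equal to the desired pairing. The endpoint hypothesis on $T$, combined with H\"older's inequality on each boundary line, gives $|\Phi(it)| \leq M_0 \|f\|_{l^p_x}^{p/p_0} \|g\|_{l^{p'}_x(l^{r'}_k)}^{p'/p_0'}$ and analogously $|\Phi(1+it)| \leq M_1 \|f\|_{l^p_x}^{p/p_1} \|g\|_{l^{p'}_x(l^{r'}_k)}^{p'/p_1'}$. Hadamard's three-lines lemma applied at $z = \theta$, together with the interpolation identities for $p$ and $p'$ (the identity for the conjugate exponents being an algebraic consequence of the one for $p$), collapses all the powers and delivers the bound. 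The one point requiring genuine care is verifying that $G_z$ interpolates the mixed norm correctly: the separation of $g$ into an inner profile $u_k(x)$ and an outer amplitude $h(x)$ is precisely what allows the two nested exponents $r'$ and $p'$ to interpolate independently, and this bookkeeping is the only place where the argument departs substantively from the scalar Riesz--Thorin theorem.
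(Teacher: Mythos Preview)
Your proposal is correct and follows precisely the approach the paper indicates: the paper itself gives no proof beyond the remark that it ``follows the same lines as the classical Riesz--Thorin interpolation theorem,'' and your sketch supplies exactly those details, including the standard two-scale construction of $G_z$ needed to interpolate the mixed norm on the target side.
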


To use this lemma, we remark that we may linearize the variation, by setting
\[ Tf(x,k):= (K_{N_k(x)} - K_{N_{k+1}(x)})*f(x),\]
where $\{N_k\}$ are any (finite) collection of measurable functions. We are ready to prove our

\begin{proposition}
Suppose $r > \max\{ p, p'\}$. Then
\[ \| \V^r(K_N*f) \|_{l^p} \lesssim_{p,r} \|f\|_{l^p}.\]
\end{proposition}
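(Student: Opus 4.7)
The plan is a straightforward complex interpolation using Lemma \ref{absint}, once the variation operator has been linearized. First I would linearize: for each $f \in l^p$ and each measurable choice of increasing integer-valued sequences $x \mapsto \{N_k(x)\}$ (obtained by a standard measurable selection that nearly saturates the sup defining $\V^r$), define the vector-valued linear operator
\[ T_{\{N_k\}} f(x,k) := (K_{N_k(x)} - K_{N_{k+1}(x)}) * f(x). \]
Then $\|T_{\{N_k\}} f\|_{l^p_x(l^r_k)} \leq \|\V^r(K_N * f)\|_{l^p}$, and conversely the supremum over all such selections recovers $\|\V^r(K_N * f)\|_{l^p}$ up to a factor of $1+\epsilon$. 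So it suffices to bound $T_{\{N_k\}}: l^p \to l^p(l^r)$ with constant independent of the selection.

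Second, identify the endpoints. Bourgain's variational theorem \cite{B1} applied on $l^{p_0}(\Z)$ gives, for any $p_0 > 1$,
\[ \|T_{\{N_k\}} f\|_{l^{p_0}(l^{\infty})} \leq \|\V^{\infty}(K_N * f)\|_{l^{p_0}} \lesssim_{p_0,P} \|f\|_{l^{p_0}}, \]
while our Proposition \ref{main2} gives, for any $r_1 > 2$,
\[ \|T_{\{N_k\}} f\|_{l^2(l^{r_1})} \leq \|\V^{r_1}(K_N * f)\|_{l^2} \lesssim_{r_1,P} \|f\|_{l^2}. \]
Crucially, both constants are independent of the selection $\{N_k\}$.

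Third, invoke Lemma \ref{absint} with endpoint pairs $(p_0, \infty)$ and $(2, r_1)$ and interpolation parameter $\theta \in (0,1)$. The convexity relations
\[ \frac{1}{p} = \frac{1-\theta}{p_0} + \frac{\theta}{2}, \qquad \frac{1}{r} = \frac{\theta}{r_1} \]
can be solved for admissible $(p_0, r_1, \theta)$ iff there exists $\theta$ satisfying the three constraints $\theta > 2/r$ (to enforce $r_1 > 2$), $\theta < 2/p$ (to enforce $p_0 > 0$, equivalently $1/p_0 > 0$), and $\theta < 2/p'$ (to enforce $p_0 > 1$, equivalently $1/p_0 < 1$); these become
\[ \frac{2}{r} < \theta < \min\left\{\frac{2}{p}, \frac{2}{p'}\right\} = \frac{2}{\max\{p,p'\}}, \]
which has a solution precisely when $r > \max\{p, p'\}$. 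Choosing such a $\theta$, setting $r_1 := r\theta$ and $1/p_0 := (1/p - \theta/2)/(1-\theta)$, and applying Lemma \ref{absint} then yields $T_{\{N_k\}}: l^p \to l^p(l^r)$ with constant depending only on $p, r, P$. Taking the supremum over selections $\{N_k\}$ completes the proof.

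The main obstacle is really only the measurable-selection/linearization step; the interpolation itself is mechanical once Lemma \ref{absint} is in hand, and the parameter region $r > \max\{p, p'\}$ drops out naturally as the solvability condition for the convex combination system, so no nontrivial computation is needed beyond the three-inequality check above.
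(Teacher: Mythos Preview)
Your proposal is correct and follows essentially the same approach as the paper: linearize the variation via the operators $Tf(x,k) = (K_{N_k(x)} - K_{N_{k+1}(x)})*f(x)$ and then apply Lemma \ref{absint} to interpolate between Bourgain's $l^{p_0} \to l^{p_0}(l^\infty)$ bound and the $l^2 \to l^2(l^{r_1})$ bound of Proposition \ref{main2}. The only cosmetic difference is that the paper splits into the cases $p<2$ and $p>2$ and exhibits explicit choices of $(p_0,r_1,\theta)$ in each, whereas you give a single, cleaner solvability analysis of the system $\frac{1}{p} = \frac{1-\theta}{p_0} + \frac{\theta}{2}$, $\frac{1}{r} = \frac{\theta}{r_1}$ that yields the same threshold $r > \max\{p,p'\}$ directly.
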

By Bourgain's Theorem and the $L^2$ variational result, we know that
\[ \| \V^r(K_N^*f) \|_2 \lesssim_r \|f\|_2 \]
for $r>2$,
and that
\[ \| \V^\infty (K_N^*f) \|_p \lesssim_p \|f\|_p \]
for $p >1$.

For each $p$ fixed, the task is to choose $r$ as small as possible so that $\theta$ satisfies
\[ \frac{1}{p} = \frac{1-\theta}{p_0} + \frac{\theta}{2} \]
and
\[ \frac{1}{r} = \frac{1 - \theta}{ \infty} + \frac{\theta}{r_1} = \frac{\theta}{r_1},\]
where $p_0>1$ and $r_1 > 2$. To minimize $r$, we want to take $r_1$ close to $2$, and $\theta$ as large as possible.
Consequently, it is in our interest to choose $p_0$ as far from $p$ as possible (i.e.\ $p_0= \infty$ for $p >2$, and $p_0$ near $1$ for $p<2$).

With this strategy in mind, we turn to the:
\begin{proof}
We begin with the slightly more involved case, $p < 2$.

To this end, let $r> p'$ be arbitrary but fixed. In this case we may find $\delta<1$ (but possibly very close) so that
\[ \frac{1}{r} = \frac{1}{p'} + (\delta - 1).\]
We set $\gamma = \gamma(\delta) = (\delta - \frac{1}{2})^{-1}$, so that
\[ \gamma (\delta - \frac{1}{2}) = 1.\]
If we choose $\theta$ to satisfy
\[ \frac{1}{p} = \frac{1-\theta}{\delta^{-1}} + \frac{\theta}{2} \]
then we also have
\[ \frac{1}{r} = \frac{\theta}{\gamma}, \]
by direct computation.

The previous Lemma \ref{absint} allows us to interpolate the result.

We next turn to the case $p > 2$. With $r > p$, and $\theta$ satisfying
\[ \frac{1}{p} = \frac{\theta}{2},\]
we may write
\[ \frac{1}{r} = \frac{\theta}{ \frac{2r}{p} },\]
where $\frac{2r}{p} > 2$.

Once again, Lemma \ref{absint} completes the proof.
\end{proof}

\section{The $2$-Variation is Unbounded on $L^2$}
In this section, we shift our frame of reference from the integer setting to the torus.

For functions on the torus, $\TT$, we (abuse notation and) define the convolution operators
\[ K_N^\alpha*f = K_N*f(x) := \frac{1}{N} \sum_{n\leq N} f(x+n^2 \alpha);\]
we will take $\alpha = 2^{-R}$, where $R$ is a large parameter, and define
the $2$-variation associated to the family $\{K_N\}$ in the expected way, as
\[ \V^2(K_N*f)(x) := \sup_{(N_k) \text{ increasing}} \left( \sum_{k} |K_{N_k}*f - K_{N_{k+1}}*f|^2 \right)^{1/2}(x).\]

We prove the following
\begin{theorem}[The 2-Variation Operator is Unbounded on $L^2$]
For any $C > 0$, there exists an $f = f_C$ of $L^2$-norm one, but so that
\[ \| \V^2(K_N*f) \|_{L^2(\TT)} \geq C.\]
\end{theorem}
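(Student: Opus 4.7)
By the Calder\'{o}n transference principle and the Kakutani-Rokhlin Lemma (both noted in the paper), it suffices to construct, for each $C > 0$, a rotation number $\alpha$ and an $f \in L^2(\TT)$ with $\|f\|_2 = 1$ satisfying $\|\V^2(K_N^\alpha * f)\|_{L^2(\TT)} \ge C$, where $K_N^\alpha * f(x) = N^{-1}\sum_{n \le N} f(x + n^2 \alpha)$. Following the hint $\alpha = 2^{-R}$ indicated above, the plan is to take $R$ large and exploit the arithmetic structure of the squares modulo powers of $2$.

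On $\TT$, the operator $K_N^\alpha$ diagonalizes in the Fourier basis with multipliers $m_N^{(k)} := N^{-1}\sum_{n\le N} e(kn^2/2^R)$. The structural key is that for $k = 2^s k'$ with $k'$ odd, $m_N^{(k)}$ is, up to a normalizing Gauss-sum factor, the normalized partial Gauss sum modulo $2^{R-s}$. Poisson summation gives the Fresnel-integral approximation: with $t = N/2^{(R-s)/2}$, one has $m_N^{(k)} \approx t^{-1}\int_0^t e(u^2)\,du$ up to small error and an $O(1)$ arithmetic weight $S(k/2^{R-s})$. This quantity exhibits a sharp transition around $t \approx 1$: its magnitude is $\approx 1$ for $t \ll 1$ and $O(1/t)$ for $t \gg 1$. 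Hence the $s$-th multiplier has a ``jump'' at the critical scale $N \approx 2^{(R-s)/2}$, and as $s$ ranges over $\{0,\ldots,R\}$ we obtain $\Theta(R)$ distinct half-dyadic transition scales spread across $[1, 2^{R/2}]$.

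With this in hand, take $f = \sum_{s=0}^{R} c_s e^{2\pi i k_s x}$, where $k_s = 2^s$ (so the $2$-adic valuation $\nu_2(k_s)$ equals $s$) and $\sum_s |c_s|^2 = 1$. The sequence $N \mapsto K_N^\alpha * f(x)$ then carries $\Theta(R)$ potential jumps, one at each transition scale. For the pointwise $2$-variation, one chooses a partition $\{N_l\}$ crossing every such transition; crucially, by letting $\{N_l(x)\}$ depend on $x$, one can align the characters $e^{2\pi i k_s x}$ constructively across the ``active'' scales, producing a pointwise $\V^2$ of order $\sqrt{R}$ on a positive-measure subset of $\TT$, and hence an $L^2$-norm of $\V^2(K_N^\alpha * f)$ of the same order, which tends to $\infty$ with $R$.

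The main obstacle is the tension between $L^2$-orthogonality and pointwise constructive interference: for any \emph{fixed}, $x$-independent partition, orthogonality of the distinct characters gives only $\sum_s |c_s|^2 \cdot O(1) = O(1)$, so the $\sqrt{R}$ growth must come entirely from the $x$-adaptivity of $\{N_l(x)\}$ in the supremum defining $\V^2$. Making this rigorous requires (i) sharp asymptotic control of each multiplier $m_N^{(k_s)}$ through its transition regime, refining the uniform estimates of Proposition \ref{est}, and (ii) a combinatorial/measure-theoretic argument on the joint distribution of the phases $\{k_s x \bmod 1\}_{s=0}^{R}$, showing that the adaptive partition extracts constructive alignment over $\Omega(R)$ scales on a subset of $\TT$ whose measure is bounded uniformly below. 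The dyadic arithmetic of $\alpha = 2^{-R}$ and the choice $k_s = 2^s$ make this distributional analysis tractable, since the relevant phases reduce to an arithmetic progression in the $2$-adic completion.
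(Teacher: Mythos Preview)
Your proposal identifies the right structural phenomenon --- each multiplier $m_N^{(k_s)}$ transitions from $\approx 1$ to $\approx 0$ at a critical scale, so $K_N^\alpha * f$ behaves like a partial-sum operator --- but the argument you build on this has a genuine gap, and the paper's proof proceeds differently.

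The gap is precisely the ``main obstacle'' you flag: you assert that $x$-adaptive partitions extract $\sqrt{R}$ growth via constructive alignment of the phases $\{k_s x\}$, but you do not prove this, and the claimed rate is almost certainly false. Once you have reduced $K_{N}*f$ to partial sums of a lacunary Fourier series (which is what your transition analysis does), the problem becomes: how large can the $2$-variation of partial sums of such a series be? This is exactly the content of the Lewko--Lewko theorem the paper cites, which produces growth of order $\log\log L$ for $L$ frequencies --- not $\sqrt{L}$. Your ``combinatorial/measure-theoretic argument on the joint distribution of the phases'' is not a routine step; it is the entire difficulty, and your sketch gives no mechanism for it. In particular, the heuristic that ``$\Theta(R)$ independent jumps should give $\sqrt{R}$'' is the square-function intuition, but the $2$-variation with adaptive stopping times does not automatically realize this; for generic coefficients $c_s$ it will not.

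The paper's proof avoids this entirely by taking the Lewko--Lewko construction as a black box. It fixes the specific $f = \sum_{i=1}^L a_i e(2^{k_i}x)$ furnished by that theorem (with $\|\V^2(S_m f)\|_2 \gtrsim \log\log L$), then chooses the frequency exponents $\{k_i\}$ and a sequence of dyadic times $\{2^{j_l}\}$ so carefully balanced against $R$ that $\sum_l |K_{2^{j_l}}*f - S_l f|$ is uniformly $O(1)$: for $i \ge l$ the mean-value theorem gives $K_{2^{j_l}}*e(2^{k_i}x) \approx e(2^{k_i}x)$, while for $i < l$ Weyl's inequality gives $K_{2^{j_l}}*e(2^{k_i}x) \approx 0$. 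The triangle inequality then transfers the $\log\log L$ lower bound directly. So the paper never attempts your step (ii); it outsources it to \cite{L}.
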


Here's the set-up:

Let $L$ be a large natural number, and let
\[ k_1 > k_2 > \dots > k_L\]
and
\[ j_1 < j_2 < \dots < j_L \]
be two sequences to be determined presently. We also set $R:= 2^{2^L}$.

For functions in the (finite-dimensional) span of $\{ e(2^{k_i} x) : 1 \leq i \leq L\}$
\[ g(x) = \sum_{i=1}^L b_i e(2^{k_i} x),\]
we define the partial summation operators
\[ S_m g(x):= \sum_{i=m}^L b_i e(2^{k_i} x).\]
For such $g$, we define the $2$-variation of the $\{S_m\}$ in the natural way
\[ \V^2(S_m(g))(x) := \sup_{(m_k) \text{ increasing}} \left( \sum_{k} |S_{m_k}g - S_{m_{k+1}}g|^2 \right)^{1/2}(x). \]

We use the following result of Lewko and Lewko \cite{L}:
\begin{proposition}[\cite{L} Theorem 6]
There exists some
\[ f(x) := \sum_{i=1}^L a_i e(2^{k_i}x) \]
with $\|f\|_{L^2(\TT)} = 1$, but
\[ \| \V^2( S_m(f) ) \|_{L^2(\TT)} \gtrsim \log ( \log (L)).\]
\end{proposition}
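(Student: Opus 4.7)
The plan is to adapt the classical Rademacher--Menshov divergence construction for general orthonormal systems to the specific setting of dyadic lacunary Fourier series, and to extract the $\log\log L$ lower bound for the $L^2$-norm of the $2$-variation of partial sums. The argument proceeds in three stages.

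\emph{Stage 1 (reduction to a martingale-type model).} By Paley's theorem, the lacunary exponential system $\{e(2^{k_i}x)\}_{i=1}^L$ is $L^p$-equivalent, for every $1<p<\infty$, to a Rademacher/Walsh martingale-difference system on a dyadic probability space, with the partial summation operators $S_m$ corresponding to conditional expectations $\mathbb{E}_m$. This conversion is harmless for the quantity of interest since the $\V^2$ functional is a positive, sub-linear functional of the partial-sum path, and we lose only absolute constants. In the martingale model we are therefore reduced to exhibiting, for each large $L$, a function $g$ of unit $L^2$ norm, supported on the first $L$ martingale differences, with $\|\V^2(\mathbb{E}_m g)\|_2 \gtrsim \log\log L$.

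\emph{Stage 2 (multi-scale Menshov packets).} Set $J\asymp \log\log L$, partition the indices $\{1,\dots,L\}$ into $J$ disjoint dyadic groups $B_1,\dots,B_J$ of sizes $|B_j|\asymp 2^{2^j}$, and on each group construct a ``Menshov packet'' $g_j$ of unit $L^2$ norm whose coefficients and phases are chosen by the classical Menshov permutation trick: re-order the basis functions inside $B_j$ so that, for some prescribed increasing sub-sequence of indices $m_0^{(j)}<m_1^{(j)}<\dots<m_{t_j}^{(j)}$ with $t_j\asymp \log|B_j|\asymp 2^j$, the partial-sum differences $\mathbb{E}_{m_s^{(j)}}g_j-\mathbb{E}_{m_{s-1}^{(j)}}g_j$ form plateaus of essentially disjoint support on $\TT$ of height $\asymp 1$ on a set of positive measure. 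This is the $L^2$-``valued'' analogue of Menshov's original divergent-series construction, and it delivers the packet-level bound
\[
\|\V^2(\mathbb{E}_m g_j)\|_{L^2(\TT)}\gtrsim \sqrt{\log|B_j|}\asymp 2^{j/2}.
\]
Assemble these into a candidate $g=\sum_{j=1}^J c_j g_j$ with weights $c_j^2\asymp 1/j^2$, so that $\|g\|_2^2=\sum c_j^2\asymp 1$.

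\emph{Stage 3 (assembling the scales).} Because the $B_j$'s live at rapidly separated frequency ranges, any increasing sequence of cut-offs $(m_t)$ splits canonically into runs within each block $B_j$, and the corresponding increments $\mathbb{E}_{m_t}g-\mathbb{E}_{m_{t-1}}g$ decompose orthogonally in $L^2(\TT)$. Consequently
\[
\|\V^2(\mathbb{E}_m g)\|_2^2\gtrsim \sum_{j=1}^J c_j^2\,\|\V^2(\mathbb{E}_m g_j)\|_2^2\gtrsim \sum_{j=1}^J \frac{2^j}{j^2}\gtrsim \frac{2^J}{J^2}.
\]
With $J\asymp \log\log L$ this exceeds $(\log\log L)^2$, and we conclude by taking square roots, then transferring back to the lacunary trigonometric system via the Paley isomorphism of Stage 1.

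\emph{Main obstacle.} The hardest step is Stage 2: securing the packet-level lower bound. The usual maximal-function Menshov construction gives only a single divergent point, whereas here we require the variational oscillations to persist on a set of positive measure in $\TT$. To achieve this, the permutation of the basis functions within $B_j$ must be coupled with a phase-choice step ensuring that the Dirichlet-kernel type partial sums, evaluated at the chosen cut-offs $m_s^{(j)}$, localize on disjoint ``atoms'' of comparable measure; this is the technical core of \cite{L}. A secondary (but easier) obstacle is verifying in Stage 3 that the orthogonality across blocks interacts well with the monotonicity constraint defining $\V^2$, which is handled by a standard splitting of the supremum over increasing index sequences into intra-block and inter-block pieces.
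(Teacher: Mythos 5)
First, a point of comparison: the paper does not prove this proposition at all --- it is imported verbatim as Theorem 6 of Lewko--Lewko \cite{L} and used as a black box, so the only question is whether your argument would actually establish it. It would not, for two reasons. The first is Stage 1. Paley/Khintchine-type theorems for lacunary series assert that the $L^p$ norm of a \emph{single} lacunary sum is comparable to the $\ell^2$ norm of its coefficients; they say nothing about the joint distribution, as functions of $x\in\TT$, of the whole partial-sum path $(S_L f(x),\dots,S_1 f(x))$. The functional $\V^2(S_m f)(x)$ depends on exactly this joint distribution, and there is no measure-space isomorphism carrying $e(2^{k_i}x)$ to Rademacher functions that intertwines $S_m$ with conditional expectations $\mathbb{E}_m$; ``positivity and sublinearity of $\V^2$'' is not a substitute for such an intertwining, and in any case you need the comparison in the lower-bound direction. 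Making the partial sums of a lacunary series behave quantitatively like a random walk (second-moment and correlation estimates for the increments, which is where lacunarity actually enters) is precisely the technical content of \cite{L}; it cannot be obtained by citing Paley.

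The second, independently fatal, gap is the packet bound of Stage 2, $\|\V^2(\mathbb{E}_m g_j)\|_2\gtrsim\sqrt{\log|B_j|}$. Menshov's permutation trick manufactures an \emph{adversarial} orthonormal system by reordering blocks of Dirichlet kernels; it cannot be carried out inside $\{e(2^{k_i}x)\}$, where the order of summation is rigidly tied to the monotone order of the frequencies and every increment $b_i e(2^{k_i}x)$ has constant modulus $|b_i|$, so the path can never ``sit still and then jump.'' For such random-walk-like blocks the achievable growth of the $2$-variation of a unit-norm packet of $M$ terms is governed by the L\'evy--Taylor exact quadratic-variation asymptotics for Brownian motion, i.e.\ of order $\sqrt{\log\log M}$, not $\sqrt{\log M}$. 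Substituting the correct packet bound $\asymp\sqrt{j}$ for your $2^{j/2}$, the Stage 3 bookkeeping yields only $\bigl(\sum_{j\le J} j^{-2}\cdot j\bigr)^{1/2}\asymp\sqrt{\log J}\asymp\sqrt{\log\log\log L}$, far short of $\log\log L$. A warning sign you should have noticed: had Stage 2 been correct, your computation would give $\gtrsim\sqrt{\log L}/\log\log L$, enormously stronger than the claimed bound and inconsistent with the upper bounds for lacunary systems proved in the same paper \cite{L}. (Your Stage 3 orthogonality argument is fine in the lower-bound direction --- concatenating measurable per-block optimal partitions and using Pythagoras --- but it cannot repair Stages 1 and 2.)
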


Fix this $f$; suppose we knew the following
\begin{lemma}
The error function
\[ \eta(f):= \sum_{l=1}^L | S_l(f) - K_{2^{j_l}}*f| \]
is bounded uniformly on $\TT$ (and in particular has $L^2$-norm $\lesssim 1$).
\end{lemma}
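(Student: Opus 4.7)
The plan is to reduce the lemma to a direct Fourier-side estimate. Writing $f(x) = \sum_{i=1}^{L} a_i e(2^{k_i} x)$ and using that each exponential is an eigenfunction of convolution by $K_N$ with eigenvalue $\widehat{K_N}(-2^{k_i}\alpha)$, we have
\[
S_l(f) - K_{2^{j_l}} * f \;=\; \sum_{i=1}^{L} a_i \,\bigl[\mathbf{1}_{i \geq l} - \widehat{K_{2^{j_l}}}(-2^{k_i}\alpha)\bigr]\, e(2^{k_i}\,\cdot).
\]
With $\alpha = 2^{-R}$ for $R = 2^{2^L}$, the frequencies $\beta_i := -2^{k_i - R}$ lie astronomically close to $0 \equiv 0/1 \in \TT$, so for every scale $N = 2^{j_l}$ appearing in the problem we are in the major arc centered at $0/1$. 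The task becomes: design the $j_l$ so that $\widehat{K_{2^{j_l}}}(\beta_i) \approx 1$ when $i \geq l$ and $\widehat{K_{2^{j_l}}}(\beta_i) \approx 0$ when $i < l$, with a quantitatively small error.

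Next, invoke the major-arc approximation driving Proposition \ref{est}. Since $S_P(0/1) = 1$, the same stationary-phase/Gauss-sum calculation carried out in \S 6 gives, for any $N$ in our range,
\[
\widehat{K_N}(\beta_i) \;=\; v_N(\beta_i) + O\bigl(N^{2\delta-1}\bigr), \qquad v_N(\beta) := \int_0^1 e(-\beta N^2 s^2)\,ds.
\]
The two standard estimates $|v_N(\beta) - 1| \lesssim |\beta| N^2$ (mean value theorem) and $|v_N(\beta)| \lesssim (|\beta| N^2)^{-1/2}$ (van der Corput) together yield, setting $\phi_{l,i} := 2^{k_i + 2j_l - R}$,
\[
\bigl|\mathbf{1}_{i \geq l} - \widehat{K_{2^{j_l}}}(\beta_i)\bigr| \;\lesssim\; \min\!\bigl\{\phi_{l,i},\, \phi_{l,i}^{-1/2}\bigr\} + 2^{j_l(2\delta - 1)},
\]
where the first alternative is the relevant one when $i \geq l$ (so $k_i \leq k_l$ and $\phi_{l,i}$ is small) and the second when $i < l$ (so $k_i \geq k_{l-1}$ and $\phi_{l,i}$ is large).

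Finally, the two free sequences close the argument. Impose $k_{l-1} - k_l \geq 100\,l$ (still free in the setup, and comfortably accommodated inside $k_1 < R$), and for each $l$ select $j_l$ with $2 j_l$ lying in the interval $(R - k_{l-1} + 40 l,\; R - k_l - 20 l)$, which is nonempty by the gap hypothesis; monotonicity of $k_l$ forces monotonicity of $j_l$. Then for $i \geq l$ one has $\phi_{l,i} \leq 2^{-20 l} \cdot 2^{-(k_l - k_i)}$, and for $i < l$ one has $\phi_{l,i}^{-1/2} \leq 2^{-20 l} \cdot 2^{-(k_i - k_{l-1})/2}$; either way the inner error is at most $2^{-20 l}$ times a quantity geometrically summable in $i$ along the $k$-gaps. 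Using $|a_i| \leq \|f\|_2 = 1$ (Parseval) and the hugeness of $R$ to absorb the harmless $2^{j_l(2\delta-1)}$ term, we conclude
\[
\|\eta(f)\|_\infty \;\leq\; \sum_{l} \sum_{i} |a_i|\,\bigl|\mathbf{1}_{i \geq l} - \widehat{K_{2^{j_l}}}(\beta_i)\bigr| \;\lesssim\; \sum_{l} 2^{-20 l} \;\lesssim\; 1,
\]
and the $L^2$ bound follows a fortiori by the triangle inequality plus Plancherel on each summand (which actually saves a square root). The only real obstacle is bookkeeping: verifying that the two Goldilocks windows for $j_l$ remain compatible with monotonicity in $l$ and that the Weyl-to-integral error is negligible on the entire range; both are straightforward once $R$ is taken as large as stipulated.
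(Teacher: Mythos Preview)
Your argument is correct and follows the same two-regime split as the paper: for $i\ge l$ the mean-value bound $|v_N(\beta)-1|\lesssim|\beta|N^2$ shows the eigenvalue is nearly $1$, and for $i<l$ cancellation makes it nearly $0$. The execution differs in two respects. First, for the cancellation regime the paper applies Weyl's inequality directly to the exponential sum $\frac{1}{N}\sum e(2^{k_i-R}n^2)$ (taking the rational approximation $1/2^{R-k_i}$), while you route through the major-arc approximation of \S6 to replace $\widehat{K_N}$ by $v_N$ and then invoke van der Corput on the integral; your way recycles machinery already in the paper, at the cost of having to keep every $\beta_i$ inside the $n$-major arc at scale $n=j_l$, i.e.\ $k_1+(2-\delta)j_L<R$ --- true for your parameters since $k_1\approx 50L^2\ll R\delta/2$, but worth stating explicitly. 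Second, the paper builds $(k_l,j_l)$ by an explicit recursion ($k_{l-1}+j_l=R$, $k_l+2j_l+L=R$), whereas you impose only gap conditions $k_{l-1}-k_l\ge100l$ and then pick $j_l$ in a window; this is more flexible and makes the geometric summation transparent. One small loose end: your window for $2j_1$ references an undefined $k_0$; since the $i<1$ regime is vacuous there is no lower constraint from cancellation, but you still need $j_1$ large (say $j_1\gtrsim R/4$) so that the $O(N^{2\delta-1})$ error, summed over all $L^2$ pairs $(l,i)$, remains $O(1)$.
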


Then, by the triangle inequality, we would be able to bound from below
\[ \| \V^2(K_{2^{j_l}}*f) \|_2 \geq
\| \V^2(S_l(f)) \|_2 - \| \eta(f) \|_2 \gtrsim \log ( \log (L)) - O(1) \]
which tends to $\infty$ with $L$, which would prove our theorem.

Before beginning the proof of our (technical) lemma, we sketch out our strategy.

We consider the interactions
\[ K_{2^{j_l}}*e(2^{k_i}x) \]
in two separate regimes: when $k_i$ is ``$R$-small'' relative to $j_l$ ($ i \geq l$), and
\[ K_{2^{j_l}}*e(2^{k_i}x) ``=" e(2^{k_i}x) \]
and when
$k_i$ is ``$R$-large'' relative to $j_l$ ($ i < l$), and
\[ K_{2^{j_l}}*e(2^{k_i}x) ``=" 0. \]

More precisely, in the first regime we use Weyl's Lemma for quadratic polynomials \cite{LY}, to bound
\[
\left| \frac{1}{2^{j_l}} \sum_{n \leq 2^{j_l} } e( \frac{2^{k_i}}{2^R} \cdot n^2 ) \right| \lesssim
j_l \left( \frac{2^{k_i}}{2^R} + \frac{1}{2^{j_l}} + \frac{2^R}{2^{2j_l + k_i}} \right)^{1/2};\]
when $j_l$ is large (as it will be), and
\[ \frac{2^{k_i}}{2^R} \geq \frac{2^R}{2^{2j_l + k_i}} \]
(as it will be)
we have the approximate uniform bound
\[ \left| \frac{1}{2^{j_l}} \sum_{n \leq 2^{j_l} } e( \frac{2^{k_i}}{2^R} \cdot n^2 ) \right| \lesssim j_l \sqrt{\frac{2^{k_i}}{2^R}} + \text{ small errors}.\]

In the second regime, we simply use the mean-value theorem to estimate
\[ \left| K_{2^{j_l}}*e(2^{k_i}x) - e(2^{k_i}x) \right| \lesssim \frac{2^{k_i + 2j_l}}{2^R}. \]

Up to some careful optimization, these estimates allow us to uniformly bound

\[ \aligned
\eta(f) &= \sum_{l=1}^L | S_l(f) - K_{2^{j_l}}*f| \\
&= \sum_{l=1}^L \left| \left( \sum_{i=1}^{l-1} a_i K_{2^{j_l}}*e(2^{k_i} x) \right) -
\left( \sum_{i=l}^{L} a_i K_{2^{j_l}}*e(2^{k_i} x) - e(2^{k_i} x) \right) \right| \\
&\leq \sum_{l=1}^L \left( \sum_{i=1}^{l-1} |K_{2^{j_l}}*e(2^{k_i} x)| +
\sum_{i=l}^{L} |K_{2^{j_l}}*e(2^{k_i} x) - e(2^{k_i} x)| \right), \endaligned \]
taking into account the trivial $|a_i| \leq \sum_i |a_i|^2 = 1$.

\begin{proof}
We begin by recursively constructing our sequences $\{k_l : 1 \leq l \leq L\}$ and $\{j_l : 1 \leq l \leq L \}$:

Starting with our top terms
\[ k_L := 0, \ j_L := \frac{R-L}{2},\]
we define
\[ k_{l-1} := R - j_l, \ \text{ and } j_l := \frac{R-L-k_l}{2};\]
in particular,
\[ k_{l-1} + j_l = R, \ \text{ and } k_l + 2j_l + L = R.\]
By induction, we find
\[ k_{L-t} = \frac{2^t - 1}{2^t} (R+L) , \ \text{ and } j_{L-t} = \frac{R - (2^{t+1} -1 )L}{2^{t+1}},\]
and so
\[ k_1 = \frac{2^{L-1} - 1}{2^{L-1}} (R+L), \ \text{ and } j_1 = \frac{ R - (2^L - 1)L}{2^L}.\]

Now, with $1 \leq l \leq L$ temporarily fixed, we estimate terms in the first regime
\[
\aligned
| K_{2^{j_i}}* e(2^{k_i}x) | &\lesssim  j_l \left( \frac{2^{k_i}}{2^R} + \frac{1}{2^{j_l}} + \frac{2^R}{2^{2j_l + k_i}} \right)^{1/2} \\
&\lesssim \frac{j_l}{2^{j_l/2}} + j_l \left( \frac{2^{k_i}}{2^R} + \frac{2^R}{2^{2j_l + k_i}} \right)^{1/2} \\
&\leq \frac{j_1}{2^{j_1/2}} + j_l \sqrt{ \frac{2^{k_i}}{2^R} }, \endaligned \]
since for $i \leq l-1$, we have
\[ \frac{2^{k_i}}{2^R} \geq \frac{2^R}{2^{2j_l + k_i}},\]
by our choice
\[ k_{l-1} + j_l = R.\]

Summing $\frac{j_1}{2^{j_1/2}} + j_l \sqrt{ \frac{2^{k_i}}{2^R} }$ over $1 \leq i < l$ leads to an upper estimate of no more than
\[ L \cdot \frac{j_1}{2^{j_1/2}} + j_l \sqrt{ \frac{2^{k_1}}{{2^R}}}.\]

We next estimate terms on our second regime, $l \leq i \leq L$,
\[ \left| K_{2^{j_l}}*e(2^{k_i}x) - e(2^{k_i}x) \right| \lesssim \frac{2^{k_i + 2j_l}}{2^R} \leq \frac{2^{k_l + 2j_l}}{2^R} = \frac{1}{2^L},\]
using that $k_l + 2j_l +L = R$.
Summing over $l \leq i \leq L$ contributes a further
\[ \frac{L}{2^L},\]
so that for our given $l$, we have
\[ | S_l(f) - K_{2^{j_l}}*f| \leq L \cdot \frac{j_1}{2^{j_1/2}} + j_l \sqrt{ \frac{2^{k_1}}{{2^R}}} + \frac{L}{2^L}.\]
Finally, summing over $1 \leq l \leq L$ leads to the estimate
\[ L^2 \frac{j_1}{2^{j_1/2}} + j_L \sqrt{ \frac{2^{k_1}}{{2^R}} } + \frac{L^2}{2^L},\]
which is $O(1)$ for all $L$ sufficiently large.

The proof is complete.
\end{proof}

\begin{remark}
Although we have chosen for simplicity to work with a rational
\[ \alpha = 2^{-R}, \]
this is purely a matter of taste: setting e.g.\
\[ \alpha' = 2^{-R} + \beta \]
where (say)
\[ \beta = .\underbrace{0 \dots 0}_{\text{$2^{2^R}$ zeroes}} 1 \underbrace{0 \dots 0}_{\text{$2^{3^R}$ zeroes}} 1 \underbrace{0 \dots 0}_{\text{$2^{4^R}$ zeroes}} 1 \dots, \]
leaves our proof unchanged, since both the mean-value and Weyl estimates remain valid. The key point is that for each $\{ k_i \}$
\[ | 2^{k_i} \alpha - 2^{k_i} \alpha' | = | 2^{k_i} \beta | \ll \frac{1}{2^{2R}}.\]
\end{remark}

\begin{remark}
An interesting question concerns the behavior of the $\V^2$ operator on other $L^p$ spaces. Since our function $f_C$ has is a linear combination of lacunary frequencies, it has $L^p$ norm $\approx_p 1$ for each $1 \leq p<\infty$. The interesting question -- beyond the scope of the present paper -- is whether \cite[Theorem 6]{L} is generalizable to other $L^p$ spaces. Given the poor behavior of the $\V^2$ operator associated to the standard Birkhoff averages (cf. e.g.\ \cite{J}) we feel comfortable risking the following:

\begin{conj}
For each $1 \leq p \leq \infty$, and any integer-valued polynomial $P(n)$, the operator
\[ \V^2(M_Nf) \]
is unbounded on $L^p(X)$.
\end{conj}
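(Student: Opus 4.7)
The plan is to transfer a Littlewood-Paley style lower bound from lacunary partial summation operators to the square averages by a careful matching of frequencies and averaging scales. The starting point is the theorem of Lewko-Lewko which, for each $L$, produces $f = \sum_{i=1}^L a_i e(2^{k_i} x)$ with $\|f\|_{L^2(\TT)} = 1$ and $\|\V^2(S_m f)\|_{L^2(\TT)} \gtrsim \log \log L$, where $S_m f := \sum_{i=m}^L a_i e(2^{k_i} x)$. If one can choose $j_1 < \dots < j_L$ and $R$ so that $K_{2^{j_l}}$ acts on each basis exponential $e(2^{k_i} x)$ essentially as the identity when $i \geq l$ and essentially as zero when $i < l$, then the pointwise triangle inequality
\[ \V^2(K_N * f)(x) \geq \V^2(S_l f)(x) - \eta(f)(x), \qquad \eta(f) := \sum_{l=1}^L |S_l f - K_{2^{j_l}} * f|, \]
will yield the theorem provided the error $\eta(f)$ has sup-norm $O(1)$ independently of $L$.

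To bound $\eta(f)$ term by term I split into two regimes. For $i \geq l$ (small frequencies relative to $j_l$), the mean value theorem gives $|K_{2^{j_l}} * e(2^{k_i} x) - e(2^{k_i} x)| \lesssim 2^{k_i + 2 j_l - R}$. For $i < l$ (large frequencies), Weyl's inequality for quadratic exponential sums yields
\[ |K_{2^{j_l}} * e(2^{k_i} x)| \lesssim j_l \left( \frac{2^{k_i}}{2^R} + \frac{1}{2^{j_l}} + \frac{2^R}{2^{2 j_l + k_i}} \right)^{1/2}. \]
These estimates are balanced by the recursive choice $k_L := 0$, $j_L := (R-L)/2$, $k_{l-1} := R - j_l$, $j_l := (R - L - k_l)/2$, and $R := 2^{2^L}$. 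The identity $k_{l-1} + j_l = R$ ensures $2^{k_i - R} \geq 2^{R - 2 j_l - k_i}$ throughout the Weyl regime, reducing that estimate to $j_l \sqrt{2^{k_i - R}} + j_l/2^{j_l/2}$; the identity $k_l + 2 j_l + L = R$ pins the mean-value error at $2^{-L}$ per term. Summing (using $|a_i| \leq 1$ coming from $\|f\|_2 = 1$) produces
\[ \|\eta(f)\|_\infty \lesssim L^2 \cdot \frac{j_1}{2^{j_1/2}} + j_L \sqrt{\frac{2^{k_1}}{2^R}} + \frac{L^2}{2^L}, \]
and solving the recursion to $k_1 \approx R+L$, $j_1 \approx R/2^L$ shows that all three summands are $O(1)$ for $L$ large.

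The main obstacle is exactly this simultaneous control in a double-parameter optimization: the recursion for $k_l$ essentially doubles at each step, so that $R$ must grow double-exponentially in $L$ merely to keep $j_1$ positive, and one must verify that the single choice $R = 2^{2^L}$ makes both the Weyl and mean-value contributions uniformly summable while leaving $j_l$ large enough for Weyl's inequality to remain effective. Once the lemma $\|\eta(f)\|_\infty = O(1)$ is established, taking $L^2$ norms in the triangle inequality gives $\|\V^2(K_N * f)\|_{L^2(\TT)} \gtrsim \log \log L - O(1)$, which exceeds any prescribed $C$ for $L$ sufficiently large. This proves the torus version; transfer to an arbitrary aperiodic measure-preserving system then follows from Calder\'{o}n's principle together with Kakutani-Rokhlin, as noted in the paper.
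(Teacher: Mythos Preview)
Your argument is a faithful reconstruction of the paper's proof of the \emph{Theorem} that $\V^2(K_N*f)$ is unbounded on $L^2(\TT)$ for the square averages $P(n)=n^2$. However, the statement you were asked to address is the \emph{Conjecture}, which asserts unboundedness on $L^p(X)$ for every $1\leq p\leq\infty$ and for every integer-valued polynomial $P$. The paper explicitly flags this conjecture as ``beyond the scope of the present paper'' and does not prove it; your proposal likewise does not prove it.

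Concretely, two ingredients in your argument are specific to the $L^2$/squares setting and do not carry over. First, the Lewko--Lewko input you invoke gives the lower bound $\|\V^2(S_m f)\|_{L^2(\TT)}\gtrsim\log\log L$ only in $L^2$; to run the same scheme in $L^p$ one would need an $L^p$ analogue, and the paper singles out precisely this extension as the open question. (The paper does observe that the bad function $f_C$, being lacunary, has $\|f_C\|_{L^p}\approx_p 1$, so the numerator side is fine; it is the $L^p$ lower bound on the variation of partial sums that is missing.) Second, your Weyl estimate and the recursive balancing $k_{l-1}+j_l=R$, $k_l+2j_l+L=R$ are tailored to the quadratic phase $n^2\alpha$; for a degree-$d$ polynomial the exponents in both the mean-value and Weyl regimes change, and the recursion would have to be redesigned accordingly. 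The paper remarks that the theorem ``is generalizable to actions associated to other integer-valued polynomials'' but gives no details, and in any case this still leaves the $L^p$ issue untouched. As written, your proposal establishes only the $p=2$, $P(n)=n^2$ special case already proved in the paper, not the conjecture.
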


\end{remark}

\end{document}